\definecolor{invisible}{RGB}{245,245,245}
\newcommand\reallywidehat[1]{%
\savestack{\tmpbox}{\stretchto{%
\scaleto{%
\scalerel*[\widthof{\ensuremath{#1}}]{\kern-.6pt\bigwedge\kern-.6pt}%
{\rule[-\textheight/2]{1ex}{\textheight}}%
}{\textheight}%
}{0.5ex}}%
\stackon[1pt]{#1}{\tmpbox}%
}
\theoremstyle{definition}
\newtheorem{theorem}{Theorem}[section]
\newtheorem{conjecture}[theorem]{Conjecture}
\newtheorem{lemma}[theorem]{Lemma}
\newtheorem{claim}{Claim}
\newtheorem{observation}[theorem]{Observation}
\newtheorem{proposition}[theorem]{Proposition}
\newtheorem{corollary}[theorem]{Corollary}
\theoremstyle{definition}
\newtheorem{definition}[theorem]{Definition}
\numberwithin{equation}{section}
\newcommand*\PrintSkips[1]{%
\typeout{In #1:}%
\typeout{\@spaces above: \the\abovecaptionskip}%
\typeout{\@spaces below: \the\belowcaptionskip}%
}
\definecolor{invisible}{RGB}{245,245,245}
\definecolor{CelestialBlue}{rgb}{0.29, 0.59, 0.82}
\definecolor{brickred}{rgb}{0.8, 0.25, 0.33}
\definecolor{amaranth}{rgb}{0.9, 0.17, 0.31}
\definecolor{awesome}{rgb}{1.0, 0.13, 0.32}
\definecolor{linkcolor}{HTML}{0000FF} %
\definecolor{citecolor}{HTML}{0000FF} %
\definecolor{urlcolor}{HTML}{0000FF} %
\newcommand\restr[2]{{
  \left.\kern-\nulldelimiterspace 
  #1 
  \vphantom{\big|} 
  \right|_{#2} 
  }}
\let\oldbibliography\thebibliography
\renewcommand{\thebibliography}[1]{
 \oldbibliography{#1}
 \setlength{\itemsep}{0pt}
}
\newcommand{\M}{\mathcal{M}}
\newcommand{\D}{\mathcal{D}}
\newcommand{\U}{\mathcal{U}}
\renewcommand{\S}{\mathcal{S}^2}
\renewcommand{\L}{\mathcal{L}}
\renewcommand{\P}{\mathcal{P}}
\newcommand{\J}{\mathcal{V}}
\newcommand{\W}{\mathcal{W}}
\newcommand{\V}{\mathcal{V}}
\newcommand{\Q}{\mathcal{Q}}
\newcommand{\T}{\mathcal{T}}
\newcommand*{\captionsource}[2]{%
  \caption[{#1}]{%
    #1 %
    #2%
  }%
}
\begin{document}

\title[\resizebox{5.3in}{!}{Sharpness of the Morton-Franks-Williams inequality for positive knots and links}]{Sharpness of the Morton-Franks-Williams\\ inequality for positive knots and links \ \vspace{-0.2cm}}
\author{Ilya Alekseev}
\thanks{This work was supported by the Russian Science Foundation under grant no. 22-11-00299, \\ https://rscf.ru/en/project/22-11-00299/.} 
\address{Steklov Mathematical Institute of Russian Academy of Sciences, 8 Gubkina St., Moscow 119991, Russia}
\email{ilyaalekseev@yahoo.com}

\begin{abstract}
We provide a combinatorial characterisation of positive diagrams satisfying the equality in the Morton-Franks-Williams bound for the degrees of the HOMFLY-PT polynomial. This characterisation allows generating with relative ease examples of diagrams realizing the crossing number, the braid index, and the maximal self-linking number. Besides, we suggest a conjecture concerning the sharpness of the Morton-Franks-Williams inequality for strongly quasipositive links.
\end{abstract}

\maketitle

\ \vspace{-1.1cm}

\section{Introduction}

In this paper, we elaborate on recent developments in polynomial invariants and obtain new applications for complexity measures of knots and links given by means of their diagrams.

Historically, complexity measures play a leading part in knot enumeration, dating back to the end of the 19th century, when P. G. Tate, T. P. Kirkman, and C. N. Little 
tabulated the simplest knots. For this, they followed some heuristic postulates, known as Tait's conjectures, related to the ‘tightness’ of special diagram classes (see \cite{Hos05}).

Typically, link projections reveal little immediate information about the links. For instance, an outwardly complicated diagram may represent the unknot. However, certain prominent diagram types, such as alternating\footnote{A diagram is {\it alternating} if, as one travels along each of its components, one meets ‘over’ and ‘under’ crossings alternately.} and positive\footnote{
An oriented diagram is {\it positive} if any of its crossings is positive.}, force a link to have much more structure. In particular, the first Tait conjecture pertains to the minimality of alternating diagrams with respect to the crossing number. It was not until the renowned discovery of the Jones polynomial in the mid-80s that the conjecture was proved. Our primary goal is to obtain comparable results for positive diagrams.

According to L. Kauffman, K. Murasugi, and M. Thistlethwaite~\cite{Kau87,Mur87,Thi87}, the breadth of the Jones polynomial $\J \in \mathbb{Z}[t^{1/2},t^{-1/2}]$ provides the following estimate for the crossing number of a link diagram:
\begin{align}\label{JonesCrossingNumberBound}
\tag{KMT}
{\rm deg}^+ \J(\D) - {\rm deg}^- \J(\D) \leqslant {\rm Cr}(\D).
\end{align}
For alternating diagrams without isthmuses\footnote{An {\it isthmus} is a crossing of a diagram whose deletion disconnects it.}, the equality holds. Therefore, such diagrams realize the minimum crossing number of the corresponding links, which is the statement of Tait's conjecture.

In fact, the sharpness of the Kauffman-Murasugi-Thistlethwaite inequality is exhausted by connected sums of alternating diagrams without isthmuses (see \cite{Tur87}), so this approach {\it a priori} fails to establish the crossing number of non-alternating links. However, certain two-variable generalizations of the Jones polynomial provide alternative estimates for the crossing number.

For example, the Kauffman polynomial directly extends the bound \eqref{JonesCrossingNumberBound} and provides a similar clear criterion for the sharpness, leading to a broader prominent class of the so-called adequate diagrams (see \cite{Thi88}). 

In this paper, we concentrate on a bound coming from the HOMFLY-PT polynomial (see below). In this case, the sharpness implies more rigid minimality conditions, and the criterion is far from a full description.

The HOMFLY-PT polynomial $\P \in \mathbb{Z}[a^{\pm 1},z^{\pm 1}]$, defined for oriented links, provides the following estimate for the crossing number \cite{Mur91,Gru03,Di04}:
\begin{align}\label{MFWCrossingNumberBound}
\tag{MFW}
{\rm deg}_z^+\P(\D) + \dfrac{1}{2}\left({\rm deg}_a^+\P(\D) - {\rm deg}_a^-\P(\D)\right) \leqslant {\rm Cr}(\D).
\end{align}
Our main objective is to establish which diagrams satisfy the equality in \eqref{MFWCrossingNumberBound}.

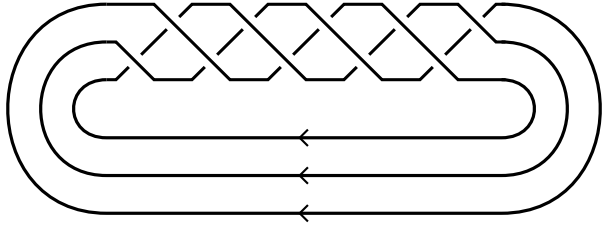
\begin{figure}[H]
\centering
\begin{tikzpicture}[rotate=90,scale=0.385, every node/.style={scale=0.5}]
\pic[
  rotate=90,
  line width=1.15pt,
  braid/control factor=0,
  braid/nudge factor=0,
  braid/gap=0.11,
  braid/number of strands = 3,
  name prefix=braid,
] at (0,0) {braid={
s_1^{-1}s_2^{-1}s_1^{-1}s_2^{-1}s_1^{-1}s_2^{-1}s_1^{-1}s_2^{-1}s_1^{-1}s_2^{-1}
}};
\draw[very thick ] (0,0) .. controls ++(0,1.5) and ++(0,1.5) .. (-2,0)
    -- (-2,-13.5) .. controls ++(0,-1.5) and ++(0,-1.5) .. (0,-13.5);
\draw [thick,-Straight Barb] (-2,-6.65) -- (-2,-6.55);
\draw[very thick] (1.3,0) .. controls ++(0,3) and ++(0,3) .. (-3.3,0) 
-- (-3.3,-13.5) .. controls ++(0,-3) and ++(0,-3) .. (1.3,-13.5);
\draw [thick,-Straight Barb] (-3.3,-6.65) -- (-3.3,-6.55);
\draw[very thick] (2.6,0) .. controls ++(0,4.5) and ++(0,4.5) .. (-4.6,0)
    -- (-4.6,-13.5) .. controls ++(0,-4.5) and ++(0,-4.5) .. (2.6,-13.5);
\draw [thick,-Straight Barb] (-4.6,-6.65) -- (-4.6,-6.55);
\end{tikzpicture}
\caption{A standard diagram of the $(3,5)$ torus knot.}
\label{TorusKnot}
\end{figure}

For example, if $\D$ is the diagram shown in Fig. \ref{TorusKnot}, then \eqref{MFWCrossingNumberBound} is sharp since
\begin{align*}
\P(\D) = (z^8+8z^6+21z^4+21z^2+7)a^8-(z^6+7z^4+14z^2+8)a^{10}+(z^2+2)a^{12}.
\end{align*}
This data is displayed in Fig. \ref{CoefficientsOfTorus}.

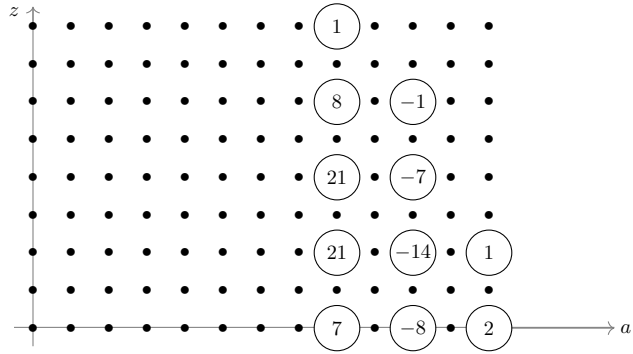
\begin{figure}[H]
\centering
\begin{tikzpicture}[rotate=90,scale=0.5, every node/.style={scale=0.7}]
\node at (8+0.4,0.5) {$z$};
\node at (0,-13.6-2) {$a$};
\draw[draw=gray,-To] (0,-12) -- (0,-13.3-2);
\draw[draw=gray,-To] (8,0) -- (8+0.02+0.5,0);
\draw[draw=gray] (0+0.02,0.5) -- (0+0.02,-13.3-2);
\draw[draw=gray] (-0.5+0.02,0) -- (8+0.02+0.5,0);
\node at (0,0) {$\bullet$};
\node at (0,-1) {$\bullet$};
\node at (0,-2) {$\bullet$};
\node at (0,-3) {$\bullet$};
\node at (0,-4) {$\bullet$};
\node at (0,-5) {$\bullet$};
\node at (0,-6) {$\bullet$};
\node at (0,-7) {$\bullet$};
\filldraw[fill=white] (0,-8) circle (0.6); \node at (0,-8) {$7$};
\node at (0,-9) {$\bullet$};
\filldraw[fill=white] (0,-10) circle (0.6); \node at (0,-10) {$-8$};
\node at (0,-11) {$\bullet$};
\filldraw[fill=white] (0,-12) circle (0.6); \node at (0,-12) {$2$};
\node at (1,0) {$\bullet$};
\node at (1,-1) {$\bullet$};
\node at (1,-2) {$\bullet$};
\node at (1,-3) {$\bullet$};
\node at (1,-4) {$\bullet$};
\node at (1,-5) {$\bullet$};
\node at (1,-6) {$\bullet$};
\node at (1,-7) {$\bullet$};
\node at (1,-8) {$\bullet$};
\node at (1,-9) {$\bullet$};
\node at (1,-10) {$\bullet$};
\node at (1,-11) {$\bullet$};
\node at (1,-12) {$\bullet$};
\node at (2,0) {$\bullet$};
\node at (2,-1) {$\bullet$};
\node at (2,-2) {$\bullet$};
\node at (2,-3) {$\bullet$};
\node at (2,-4) {$\bullet$};
\node at (2,-5) {$\bullet$};
\node at (2,-6) {$\bullet$};
\node at (2,-7) {$\bullet$};
\filldraw[fill=white] (2,-8) circle (0.6); \node at (2,-8) {$21$};
\node at (2,-9) {$\bullet$};
\filldraw[fill=white] (2,-10) circle (0.6); \node at (2,-10) {$-14$};
\node at (2,-11) {$\bullet$};
\filldraw[fill=white] (2,-12) circle (0.6); \node at (2,-12) {$1$};
\node at (3,0) {$\bullet$};
\node at (3,-1) {$\bullet$};
\node at (3,-2) {$\bullet$};
\node at (3,-3) {$\bullet$};
\node at (3,-4) {$\bullet$};
\node at (3,-5) {$\bullet$};
\node at (3,-6) {$\bullet$};
\node at (3,-7) {$\bullet$};
\node at (3,-8) {$\bullet$};
\node at (3,-9) {$\bullet$};
\node at (3,-10) {$\bullet$};
\node at (3,-11) {$\bullet$};
\node at (3,-12) {$\bullet$};
\node at (4,0) {$\bullet$};
\node at (4,-1) {$\bullet$};
\node at (4,-2) {$\bullet$};
\node at (4,-3) {$\bullet$};
\node at (4,-4) {$\bullet$};
\node at (4,-5) {$\bullet$};
\node at (4,-6) {$\bullet$};
\node at (4,-7) {$\bullet$};
\filldraw[fill=white] (4,-8) circle (0.6); \node at (4,-8) {$21$};
\node at (4,-9) {$\bullet$};
\filldraw[fill=white] (4,-10) circle (0.6); \node at (4,-10) {$-7$};
\node at (4,-11) {$\bullet$};
\node at (4,-12) {$\bullet$};
\node at (5,0) {$\bullet$};
\node at (5,-1) {$\bullet$};
\node at (5,-2) {$\bullet$};
\node at (5,-3) {$\bullet$};
\node at (5,-4) {$\bullet$};
\node at (5,-5) {$\bullet$};
\node at (5,-6) {$\bullet$};
\node at (5,-7) {$\bullet$};
\node at (5,-8) {$\bullet$};
\node at (5,-9) {$\bullet$};
\node at (5,-10) {$\bullet$};
\node at (5,-11) {$\bullet$};
\node at (5,-12) {$\bullet$};
\node at (6,0) {$\bullet$};
\node at (6,-1) {$\bullet$};
\node at (6,-2) {$\bullet$};
\node at (6,-3) {$\bullet$};
\node at (6,-4) {$\bullet$};
\node at (6,-5) {$\bullet$};
\node at (6,-6) {$\bullet$};
\node at (6,-7) {$\bullet$};
\filldraw[fill=white] (6,-8) circle (0.6); \node at (6,-8) {$8$};
\node at (6,-9) {$\bullet$};
\filldraw[fill=white] (6,-10) circle (0.6); \node at (6,-10) {$-1$};
\node at (6,-11) {$\bullet$};
\node at (6,-12) {$\bullet$};
\node at (7,0) {$\bullet$};
\node at (7,-1) {$\bullet$};
\node at (7,-2) {$\bullet$};
\node at (7,-3) {$\bullet$};
\node at (7,-4) {$\bullet$};
\node at (7,-5) {$\bullet$};
\node at (7,-6) {$\bullet$};
\node at (7,-7) {$\bullet$};
\node at (7,-8) {$\bullet$};
\node at (7,-9) {$\bullet$};
\node at (7,-10) {$\bullet$};
\node at (7,-11) {$\bullet$};
\node at (7,-12) {$\bullet$};
\node at (8,0) {$\bullet$};
\node at (8,-1) {$\bullet$};
\node at (8,-2) {$\bullet$};
\node at (8,-3) {$\bullet$};
\node at (8,-4) {$\bullet$};
\node at (8,-5) {$\bullet$};
\node at (8,-6) {$\bullet$};
\node at (8,-7) {$\bullet$};
\filldraw[fill=white] (8,-8) circle (0.6); \node at (8,-8) {$1$};
\node at (8,-9) {$\bullet$};
\node at (8,-10) {$\bullet$};
\node at (8,-11) {$\bullet$};
\node at (8,-12) {$\bullet$};
\end{tikzpicture}
\caption{The HOMFLY-PT polynomial of the $(3,5)$ torus knot.}
\label{CoefficientsOfTorus}
\end{figure}

To clarify the origin of the estimate \eqref{MFWCrossingNumberBound}, we define the following diagram quantities. The difference between the number of positive 
\begin{tikzpicture}[rotate=90, scale=0.9, baseline=1]
\draw[->] (0.3,0.3)--(0,0);
\draw[thick] (0.3,0.3)--(0,0);
\draw[thick] (0.2,0.1)--(0.3,0);
\draw[->] (0.2,0.1)--(0.3,0);
\draw[thick] (0,0.3)--(0.1,0.2);
\end{tikzpicture}
and the number of negative 
\begin{tikzpicture}[rotate=90, scale=0.9, baseline=1]
\draw[->] (0,0.3)--(0.3,0);
\draw[thick] (0,0.3)--(0.3,0);
\draw[thick] (0.1,0.1)--(0,0);
\draw[->] (0.1,0.1)--(0,0);
\draw[thick] (0.3,0.3)--(0.2,0.2);
\end{tikzpicture}
crossings of a diagram $\D$ is called the {\it writhe} and is denoted by $\omega(\D)$. Diagram transformations of the form~\begin{tikzpicture}[rotate=90, scale=0.9, baseline=1]
\draw[->] (0.3,0.3)--(0,0);
\draw[thick] (0.3,0.3)--(0,0);
\draw[thick] (0.2,0.1)--(0.3,0);
\draw[->] (0.2,0.1)--(0.3,0);
\draw[thick] (0,0.3)--(0.1,0.2);
\end{tikzpicture}
$\mapsto$ 
\begin{tikzpicture}[rotate=90, scale=0.9, baseline=1]
\draw[->] (0,0.3)--(0,0);
\draw[thick] (0,0.3)--(0,0);
\draw[->] (0.3,0.3)--(0.3,0);
\draw[thick] (0.3,0.3)--(0.3,0);
\end{tikzpicture}
$\mathrel{\reflectbox{\ensuremath{\mapsto}}}$
\begin{tikzpicture}[rotate=90, scale=0.9, baseline=1]
\draw[->] (0,0.3)--(0.3,0);
\draw[thick] (0,0.3)--(0.3,0);
\draw[thick] (0.1,0.1)--(0,0);
\draw[->] (0.1,0.1)--(0,0);
\draw[thick] (0.3,0.3)--(0.2,0.2);
\end{tikzpicture}
are called {\it crossing smoothing}. Simple closed curves obtained by smoothing all crossings are called {\it Seifert circles}. The number of Seifert circles of $\D$ is denoted by~$s(\D)$.

As a rule, complexity of any link polynomial satisfying a skein relation is bounded by complexity of any link diagram. 
The HOMFLY-PT polynomial obeys the following ``upper'' (see \cite{Mor86}), ``left'', and ``right'' (see \cite{Mor86, FW87}) inequalities:
\begin{align}
\tag{U}
&{\rm deg}_z^+\P(\D) \leqslant {\rm Cr}(\D) - s(\D) + 1, \label{U} \\
\tag{L}
&\omega(\D) - s(\D)+1 \leqslant {\rm deg}_a^-\P(\D), \label{L} \\
\tag{R}
&{\rm deg}_a^+\P(\D) \leqslant \omega(\D) + s(\D)-1. \label{R}
\end{align}
Note that~\eqref{U}, \eqref{L}, and~\eqref{R} imply \eqref{MFWCrossingNumberBound} and that \eqref{MFWCrossingNumberBound} is sharp if and only if each of~\eqref{U}, \eqref{L}, and~\eqref{R} is sharp. Therefore, to establish and clarify the sharpness, we can look into these bounds separately.

It turns out that the above inequalities relate to other link complexity measures, which are independent of the crossing number. Namely, \eqref{U} provides a bound for the canonical genus, \eqref{L} provides a bound for the maximal self-linking number, and the inequalities \eqref{L} and \eqref{R} provide a bound for the braid index.

\subsection{Main results}

For positive diagrams, the equalities in~\eqref{U} (see~\cite{Cro89}) and in~\eqref{L} (see~\cite{Yok92}) always hold. Therefore, the sharpness of~\eqref{MFWCrossingNumberBound} is equivalent to the sharpness of~\eqref{R}. In this paper, we prove the following combinatorial description for the latter.

\begin{theorem}\label{MainTheorem}
For a positive diagram $\D$, the equality in \eqref{R} holds if and only if $\D$ can be obtained from a zero-crossing diagram by transformations shown in Fig.~\ref{Transformations}.
\end{theorem}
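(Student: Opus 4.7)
For the \emph{if} direction, I verify directly that each move in Fig.~\ref{Transformations} preserves the quantity $\mathrm{deg}_a^+\P(\D) - \omega(\D) - s(\D) + 1$ and takes positive diagrams to positive diagrams. The preservation of $\omega(\D) + s(\D) - 1$ is a purely local combinatorial check, since each move changes the writhe and the Seifert-circle count in compensating ways. The preservation of $\mathrm{deg}_a^+\P$ follows from the HOMFLY-PT skein relation applied to the newly-added crossings together with the ``upper'' and ``right'' bounds \eqref{U}, \eqref{R}; in geometric terms the moves correspond to positive Reidemeister I twists and to Murasugi sums with positive Hopf bands, both of which are known to preserve the top $a$-coefficient. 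The base case is a disjoint union of $s$ unknotted circles, for which $\P = ((a-a^{-1})/z)^{s-1}$ gives $\mathrm{deg}_a^+\P = s - 1 = \omega + s - 1$.

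For the \emph{only if} direction, I argue by induction on $\Cr(\D)$. The case $\Cr(\D) = 0$ is trivial. For the inductive step, I need the key structural claim: \emph{every positive diagram $\D$ with $\Cr(\D) \geqslant 1$ that attains equality in \eqref{R} contains a local configuration that is the output of one of the moves in Fig.~\ref{Transformations}}. Once this is established, I invert the move to get a smaller positive diagram $\D'$. Since I have already verified in the ``if'' direction that the moves preserve equality in~\eqref{R} in both directions, the inductive hypothesis applies to $\D'$, and reassembling the reduction sequence proves the theorem.

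The heart of the argument is thus the structural claim, which I would approach through the Seifert graph $\Gamma(\D)$: vertices are Seifert circles, edges are crossings, and positivity gives a canonical planar embedding. The right-hand side of~\eqref{R} can be read off $\Gamma(\D)$, while the left-hand side is bounded above by a sum of contributions from a Seifert-state (or ``Morton'') expansion of the HOMFLY-PT polynomial. Positivity forces all contributions to the top $a$-power to carry the same sign, so the equality $\mathrm{deg}_a^+\P(\D) = \omega(\D) + s(\D) - 1$ amounts to the statement that these terms do not cancel. I would translate this non-cancellation into a combinatorial constraint on $\Gamma(\D)$: every edge must belong to a ``reducible block'' — a leaf edge, a pair of parallel edges between two Seifert circles, or a more general subconfiguration — matching precisely the list of moves in Fig.~\ref{Transformations}.

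The main obstacle will be this structural step, that is, showing that non-cancellation of the top $a$-power in the Seifert-state model is \emph{equivalent} (not merely implied) to the presence of a reducible block, and exhibiting explicitly which block appears. The strategy is to compare $\Gamma(\D)$ with the Seifert graph of the intermediate resolution obtained by smoothing or changing one crossing, and to use the exact skein behaviour of the leading $a$-coefficient under these local moves on $\Gamma(\D)$, together with planarity of the Seifert embedding, to locate the block whose inversion is Fig.~\ref{Transformations}.
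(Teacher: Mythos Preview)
Your inductive skeleton matches the paper's, but there are two genuine gaps.

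First, the assertion that the moves ``preserve equality in~\eqref{R} in both directions'' is not correct for the shackle move or for crossing doubling. From $\P(\U_+)=a^{2}\P(\V)+az\P(\U_0)$ one only deduces that sharpness of~\eqref{R} for $\U_+$ forces sharpness for \emph{at least one} of $\V$ and $\U_0$; there is no control over which, so you cannot simply ``invert the move'' and assume the result is still sharp. Separately, inverting an Artin move does not lower the crossing number, so that move cannot carry the induction on its own. The paper deals with both issues together: Lemma~\ref{SharpNegativeHaveDoubleCrossing} shows that after finitely many Artin moves---termination proved via an explicit potential on pairs of coherent Seifert circles---a positive diagram with \eqref{R} sharp and at least one crossing must contain a double-crossing region. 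Then Lemma~\ref{SkeinSharpnessLemma}(i) guarantees that one of the two reductions $\V,\U_0$ is again sharp, and since $\D$ is recovered from \emph{either} of them by one of the listed moves, the induction closes regardless of which.

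Second, and more substantially, the structural step---locating that double-crossing region---is where essentially all the work lies, and your Seifert-graph outline is too coarse to supply it. The graph $\Gamma(\D)$ records edge multiplicities but not the cyclic order of crossings along each Seifert circle, and that order is precisely what determines whether two crossings are adjacent; the closed full twist on three strands, for instance, has \eqref{R} sharp yet contains no double-crossing region before Artin moves are applied. The paper's argument is considerably more involved: it uses the castle construction of Diao--Hetyei--Liu, the existence of an appropriate base point whose castle has no traps (Lemma~\ref{TrappedCastle}), a directed normalisation by Artin moves, and coherent resolution trees. Sharpness of~\eqref{R} produces a leaf meeting the criterion of Proposition~\ref{a-HighestDegreeCriterion} (each component projects to a simple closed curve); tracing the component through the base point within the normalised castle locates a brace that is loose from above, and a descent on the brace level yields the double crossing. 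None of this structure is visible at the level of $\Gamma(\D)$.
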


\begin{figure}[H]
\centering
\begin{tikzpicture}[rotate=90, scale=2, baseline=1]
\draw[->] (0.15+0,0.6)--(0.15+0,0);
\draw[thick] (0.15+0,0.6)--(0.15+0,0);
\draw[->] (0.15+0.3,0.6)--(0.15+0.3,0);
\draw[thick] (0.15+0.3,0.6)--(0.15+0.3,0);
\draw[-Latex]   (0.3,-0.15) -- (0.3,-0.45);
\draw[thick] (0.15+0.3,0.3-1.2)--(0.15+0,0-1.2);
\draw[->] (0.15+0.3,0.3-1.2)--(0.15+0,0-1.2);
\draw[thick] (0.15+0.2,0.1-1.2)--(0.15+0.3,0-1.2);
\draw[->] (0.15+0.2,0.1-1.2)--(0.15+0.3,0-1.2);
\draw[thick] (0.15+0,0.3-1.2)--(0.15+0.1,0.2-1.2);
\draw[thick] (0.15+0.3,0.6-1.2)--(0.15+0,0.3-1.2);
\draw[thick] (0.15+0.2,0.4-1.2)--(0.15+0.3,0.3-1.2);
\draw[thick] (0.15+0,0.6-1.2)--(0.15+0.1,0.5-1.2);
\end{tikzpicture}\hspace{1.5cm}
\begin{tikzpicture}[rotate=90, scale=2, baseline=1]
\draw[->] (0.15+0.3,0.3)--(0.15+0,0);
\draw[thick] (0.15+0.3,0.3)--(0.15+0,0);
\draw[thick] (0.15+0.2,0.1)--(0.15+0.3,0);
\draw[->] (0.15+0.2,0.1)--(0.15+0.3,0);
\draw[thick] (0.15+0,0.3)--(0.15+0.1,0.2);
\draw[-Latex]   (0.3,-0.15) -- (0.3,-0.45);
\draw[thick] (0.15+0.3,0.3-1.2)--(0.15+0,0-1.2);
\draw[->] (0.15+0.3,0.3-1.2)--(0.15+0,0-1.2);
\draw[thick] (0.15+0.2,0.1-1.2)--(0.15+0.3,0-1.2);
\draw[->] (0.15+0.2,0.1-1.2)--(0.15+0.3,0-1.2);
\draw[thick] (0.15+0,0.3-1.2)--(0.15+0.1,0.2-1.2);
\draw[thick] (0.15+0.3,0.6-1.2)--(0.15+0,0.3-1.2);
\draw[thick] (0.15+0.2,0.4-1.2)--(0.15+0.3,0.3-1.2);
\draw[thick] (0.15+0,0.6-1.2)--(0.15+0.1,0.5-1.2);
\end{tikzpicture}\hspace{1.5cm}
\begin{tikzpicture}[rotate=90, scale=2, baseline=1]
\draw[thick] (0.3,0.3)--(0,0);
\draw[thick] (0.2,0.1)--(0.3,0);
\draw[thick] (0,0.3)--(0.1,0.2);
\draw[thick] (0.3+0.3,0.3+0.3)--(0.3+0,0.3+0);
\draw[thick] (0.3+0.2,0.3+0.1)--(0.3+0.3,0.3+0);
\draw[thick] (0.3+0,0.3+0.3)--(0.3+0.1,0.3+0.2);
\draw[thick] (0.3,0.6+0.3)--(0,0.6+0);
\draw[thick] (0.2,0.6+0.1)--(0.3,0.6+0);
\draw[thick] (0,0.6+0.3)--(0.1,0.6+0.2);
\draw[thick] (0,0.3)--(0,0.6);
\draw[thick] (0.6,0)--(0.6,0.3);
\draw[thick] (0.6,0.6)--(0.6,0.9);
\draw[-Latex]   (0.3,-0.15) -- (0.3,-0.45);
\draw[-Latex]   (0.3,-0.45) -- (0.3,-0.15);
\draw[thick] (0+0.6,0.3-1.5)--(0+0.6,0.6-1.5);
\draw[thick] (0+0,0-1.5)--(0+0,0.3-1.5);
\draw[thick] (0+0,0.6-1.5)--(0+0,0.9-1.5);
\draw[thick] (0+0.3+0.3,0.3-1.5)--(0+0.3+0,0-1.5);
\draw[thick] (0+0.3+0.2,0.1-1.5)--(0+0.3+0.3,0-1.5);
\draw[thick] (0+0.3+0,0.3-1.5)--(0+0.3+0.1,0.2-1.5);
\draw[thick] (0+0.3,0.3+0.3-1.5)--(0+0,0.3+0-1.5);
\draw[thick] (0+0.2,0.3+0.1-1.5)--(0+0.3,0.3+0-1.5);
\draw[thick] (0+0,0.3+0.3-1.5)--(0+0.1,0.3+0.2-1.5);
\draw[thick] (0+0.3+0.3,0.6+0.3-1.5)--(0+0.3+0,0.6+0-1.5);
\draw[thick] (0+0.3+0.2,0.6+0.1-1.5)--(0+0.3+0.3,0.6+0-1.5);
\draw[thick] (0+0.3+0,0.6+0.3-1.5)--(0+0.3+0.1,0.6+0.2-1.5);
\draw[->]    (0.3+0.3,0.3)--(0.3+0.3,0);
\draw[->]    (0.2,0.1)--(0+0.3,0);
\draw[->]    (0.1,0.1)--(0,0);
\draw[->]    (0.3+0.2,0.1-1.5)  --(0.3+0.3,0-1.5);
\draw[->]    (0.3+0.1,0.1-1.5)--(0.3+0,0-1.5);
\draw[->]    (0.3-0.3,0.3-1.5)--(0.3-0.3,0-1.5);
\end{tikzpicture}
\caption{A shackle move, a crossing doubling, and an Artin move.}
\label{Transformations}
\end{figure}
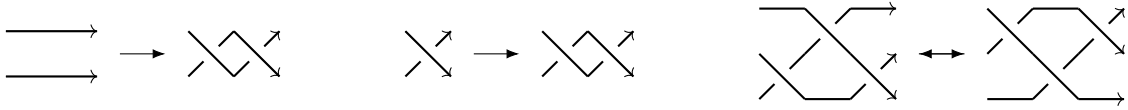

In contrast to the case of adequate diagrams, this combinatorial description does not allow us to check relatively quickly whether \eqref{R} is sharp. However, it makes it relatively easy to generate examples.

For positive diagrams of closed braids, Theorem \ref{MainTheorem} follows from \cite{GMM14}. Below we list some diagram types suggested by J. Gonz\`{a}lez-Meneses and P. M. G. Manch\`{o}n as well as new ones.

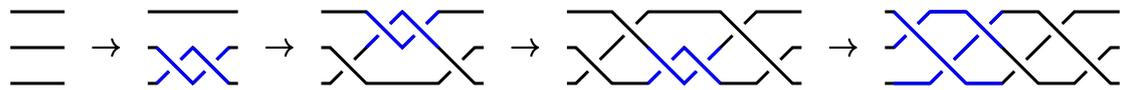
\begin{figure}[H]
\centering
\begin{tikzpicture}[rotate=90,scale=0.36575, every node/.style={scale=0.475}]
\pic[
  rotate=90,
  line width=1.15pt,
  braid/control factor=0,
  braid/nudge factor=0,
  braid/gap=0.11,
  braid/number of strands = 3,
  name prefix=braid,
] at (0,0) {braid={
1
}};
\draw[thick, -To] (1.3,-0.64-1.3-1) -- (1.3,-0.64-1.3-2); 
\pic[
  rotate=90,
  line width=1.15pt,
  braid/control factor=0,
  braid/nudge factor=0,
  braid/gap=0.11,
  braid/number of strands = 3,
  name prefix=braid,
] at (0,-0.64-1.3-3) {braid={
s_1^{-1}s_1^{-1}
}};
\draw[very thick, draw=blue] (1.3,-5.26) -- (0,-5.26-1.3); 
\draw[very thick, draw=blue] (0,-5.26) -- (0.44,-5.26-0.44); 
\draw[very thick, draw=blue] (2.16-1.3,-5.26-1.3+0.44) -- (2.6-1.3+0.01,-5.26-1.3-0.01); 
\draw[very thick, draw=blue] (1.3,-5.26-1.3) -- (0,-5.26-1.3-1.3); 
\draw[very thick, draw=blue] (0,-5.26-1.3) -- (0.44,-5.26-0.44-1.3); 
\draw[very thick, draw=blue] (2.16-1.3,-5.26-1.3+0.44-1.3) -- (2.6-1.3+0.01,-5.26-1.3-0.01-1.3); 
\draw[thick, -To] (1.3,-0.64-1.3-3 - 0.64-2.6-1) -- (1.3,-0.64-1.3-3 - 0.64-2.6-2); 
\pic[
  rotate=90,
  line width=1.15pt,
  braid/control factor=0,
  braid/nudge factor=0,
  braid/gap=0.11,
  braid/number of strands = 3,
  name prefix=braid,
] at (0,-0.64-1.3-3 - 0.64-2.6-3) {braid={
s_1^{-1}s_2^{-1}s_2^{-1}s_1^{-1}
}};
\draw[very thick, draw=blue] (1.3+1.3,-12.8) -- (0+1.3,-12.8-1.3) -- (0.44+1.3,-12.8-1.3-0.44); 
\draw[very thick, draw=blue] (2.16-1.3+1.3,-12.8-1.3-1.3+0.44) -- (2.6-1.3+0.01+1.3,-12.8-1.3-1.3-0.01); 
\draw[very thick, draw=blue] (0+1.3,-12.8) -- (0.44+1.3,-12.8-0.44); 
\draw[very thick, draw=blue] (1.3+1.3,-12.8-1.3) -- (0+1.3,-12.8-1.3-1.3); 
\draw[very thick, draw=blue] (2.16-1.3+1.3,-12.8-1.3+0.44) -- (2.6-1.3+0.01+1.3,-12.8-1.3-0.01); 
\draw[thick, -To] (1.3,-0.64-1.3-3 - 0.64-2.6-3 - 0.64-5.2-1) -- (1.3,-0.64-1.3-3 - 0.64-2.6-3 - 0.64-5.2-2); 
\pic[
  rotate=90,
  line width=1.15pt,
  braid/control factor=0,
  braid/nudge factor=0,
  braid/gap=0.11,
  braid/number of strands = 3,
  name prefix=braid,
] at (0,-0.64-1.3-3 - 0.64-2.6-3 - 0.64-5.2-3) {braid={
s_1^{-1}s_2^{-1}s_1^{-1}s_1^{-1}s_2^{-1}s_1^{-1}
}};
\draw[very thick, draw=blue] (1.3,-22.94) -- (0,-22.94-1.3) -- (0.44,-22.94-1.3-0.44); 
\draw[very thick, draw=blue] (2.16-1.3,-22.94-1.3-1.3+0.44) -- (2.6-1.3+0.01,-22.94-1.3-1.3-0.01); 
\draw[very thick, draw=blue] (0,-22.94) -- (0.44,-22.94-0.44); 
\draw[very thick, draw=blue] (1.3,-22.94-1.3) -- (0,-22.94-1.3-1.3); 
\draw[very thick, draw=blue] (2.16-1.3,-22.94-1.3+0.44) -- (2.6-1.3+0.01,-22.94-1.3-0.01); 
\draw[thick, -To] (1.3,-0.64-1.3-3 - 0.64-2.6-3 - 0.64-5.2-3 - 0.64-7.8-1) -- (1.3,-0.64-1.3-3 - 0.64-2.6-3 - 0.64-5.2-3 - 0.64-7.8-2); 
\pic[
  rotate=90,
  line width=1.15pt,
  braid/control factor=0,
  braid/nudge factor=0,
  braid/gap=0.11,
  braid/number of strands = 3,
  name prefix=braid,
] at (0,-0.64-1.3-3 - 0.64-2.6-3 - 0.64-5.2-3 - 0.64-7.8-3) {braid={
s_2^{-1}s_1^{-1}s_2^{-1}s_1^{-1}s_2^{-1}s_1^{-1}
}};
\draw[very thick, draw=blue] (1.3+1.3,-31.78) -- (0+1.3,-31.78-1.3); 
\draw[very thick, draw=blue] (0+1.3,-31.78) -- (0.44+1.3,-31.78-0.44); 
\draw[very thick, draw=blue] (2.16-1.3+1.3,-31.78-1.3+0.44) -- (2.6-1.3+0.01+1.3,-31.78-1.3-0.01); 
\draw[very thick, draw=blue] (1.3,-31.78-1.3) -- (0,-31.78-1.3-1.3); 
\draw[very thick, draw=blue] (0,-31.78-1.3) -- (0.44,-31.78-0.44-1.3); 
\draw[very thick, draw=blue] (2.16-1.3,-31.78-1.3+0.44-1.3) -- (2.6-1.3+0.01,-31.78-1.3-0.01-1.3); 
\draw[very thick, draw=blue] (1.3+1.3,-31.78-2.6) -- (0+1.3,-31.78-1.3-2.6); 
\draw[very thick, draw=blue] (0+1.3,-31.78-2.6) -- (0.44+1.3,-31.78-0.44-2.6); 
\draw[very thick, draw=blue] (2.16-1.3+1.3,-31.78-1.3+0.44-2.6) -- (2.6-1.3+0.01+1.3,-31.78-1.3-0.01-2.6); 
\draw[very thick, draw=blue] (0,-31.78) -- (0,-31.78-1.3);
\draw[very thick, draw=blue] (2.6,-31.78-1.3) -- (2.6,-31.78-2.6);
\draw[very thick, draw=blue] (0,-31.78-2.6) -- (0,-31.78-3.9);
\end{tikzpicture}
\caption{The process of obtaining the positive full twist with three strands.}
\label{ObtainingFullTwistBraid}
\end{figure}

First, according to Fig.~\ref{ObtainingFullTwistBraid}, the full-twist braid diagrams satisfy Theorem~\ref{MainTheorem}. 

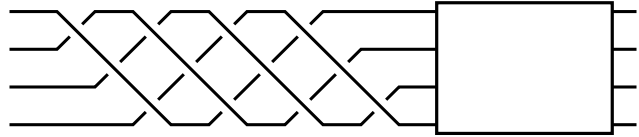
\begin{figure}[H]
\centering
\begin{tikzpicture}[rotate=90,scale=0.385, every node/.style={scale=0.5}]
\pic[
  rotate=90,
  line width=1.15pt,
  braid/control factor=0,
  braid/nudge factor=0,
  braid/gap=0.11,
  braid/number of strands = 4,
  name prefix=braid,
] at (0,0) {braid={
1
s_3^{-1}
s_2^{-1}
s_1^{-1}-s_3^{-1}
s_2^{-1}
s_1^{-1}-s_3^{-1}
s_2^{-1}
s_1^{-1}-s_3^{-1}
s_2^{-1}
s_1^{-1}
111111
}};
\draw[draw=black, very thick, fill=white] (-0.3,-0.32-13-1-0.28) rectangle ++(3.9+0.6,-6);
\end{tikzpicture}
\caption{A positive braid with the full twist.}
\label{FullTwistBraid}
\end{figure}

Second, 
one can obtain an arbitrary positive braid containing the full twist (see Fig. \ref{FullTwistBraid})
by doubling appropriate crossings on the bottom row of the full twist and applying suitable Artin moves as shown in Fig. \ref{ObtainingBraisWithFullTwist}.

\begin{figure}[H]
\centering
\begin{tikzpicture}[rotate=90,scale=0.385, every node/.style={scale=0.5}]
\pic[
  rotate=90,
  line width=1.15pt,
  braid/control factor=0,
  braid/nudge factor=0,
  braid/gap=0.11,
  braid/number of strands = 4,
  name prefix=braid,
] at (0,0) {braid={
s_3^{-1}
s_2^{-1}
s_1^{-1}
s_3^{-1}
s_2^{-1}
s_1^{-1}
s_3^{-1}
s_2^{-1}
s_1^{-1}
s_1^{-1}-s_3^{-1}
s_2^{-1}
s_1^{-1}
}};
\draw[very thick, draw=blue] (1.3,-10.72) -- (0,-10.72-1.3); 
\draw[very thick, draw=blue] (0,-10.72) -- (0.44,-10.72-0.44); 
\draw[very thick, draw=blue] (2.16-1.3,-10.72-1.3+0.44) -- (2.6-1.3+0.01,-10.72-1.3-0.01); 
\draw[very thick, draw=blue] (1.3,-10.72-1.3) -- (0,-10.72-1.3-1.3); 
\draw[very thick, draw=blue] (0,-10.72-1.3) -- (0.44,-10.72-0.44-1.3); 
\draw[very thick, draw=blue] (2.16-1.3,-10.72-1.3+0.44-1.3) -- (2.6-1.3+0.01,-10.72-1.3-0.01-1.3); 
\draw[thick, -To] (1.3+0.65,-0.64-15.6-1) -- (1.3+0.65,-0.64-15.6-2); 
\pic[
  rotate=90,
  line width=1.15pt,
  braid/control factor=0,
  braid/nudge factor=0,
  braid/gap=0.11,
  braid/number of strands = 4,
  name prefix=braid,
] at (0,-0.64-15.6-3) {braid={
s_3^{-1}
s_2^{-1}
s_1^{-1}
s_3^{-1}
s_2^{-1}
s_1^{-1}
s_3^{-1}
s_2^{-1}
s_1^{-1}
s_3^{-1}
s_2^{-1}
s_1^{-1}
s_2^{-1}
}};
\draw[very thick, draw=blue] (1.3+1.3,-32.56) -- (0+1.3,-32.56-1.3); 
\draw[very thick, draw=blue] (0+1.3,-32.56) -- (0.44+1.3,-32.56-0.44); 
\draw[very thick, draw=blue] (2.16-1.3+1.3,-32.56-1.3+0.44) -- (2.6-1.3+0.01+1.3,-32.56-1.3-0.01); 
\draw[very thick, draw=blue] (1.3,-32.56-1.3) -- (0,-32.56-1.3-1.3); 
\draw[very thick, draw=blue] (0,-32.56-1.3) -- (0.44,-32.56-0.44-1.3); 
\draw[very thick, draw=blue] (2.16-1.3,-32.56-1.3+0.44-1.3) -- (2.6-1.3+0.01,-32.56-1.3-0.01-1.3); 
\draw[very thick, draw=blue] (1.3+1.3,-32.56-2.6) -- (0+1.3,-32.56-1.3-2.6); 
\draw[very thick, draw=blue] (0+1.3,-32.56-2.6) -- (0.44+1.3,-32.56-0.44-2.6); 
\draw[very thick, draw=blue] (2.16-1.3+1.3,-32.56-1.3+0.44-2.6) -- (2.6-1.3+0.01+1.3,-32.56-1.3-0.01-2.6); 
\draw[very thick, draw=blue] (0,-32.56) -- (0,-32.56-1.3);
\draw[very thick, draw=blue] (2.6,-32.56-1.3) -- (2.6,-32.56-2.6);
\draw[very thick, draw=blue] (0,-32.56-2.6) -- (0,-32.56-3.9);
\end{tikzpicture}
\caption{The process of obtaining positive braids with the full twist.}
\label{ObtainingBraisWithFullTwist}
\end{figure}
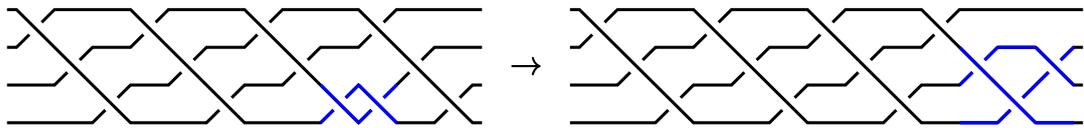

Equally, as shown in Fig. \ref{PositiveKnittedStencil}, similar positive diagrams with arbitrary configurations of Seifert circles also satisfy Theorem~\ref{MainTheorem}. Besides, see Fig.~\ref{ShackeExample} for a more complicated cable-like example.

In some cases, one can simplify the condition of Theorem~\ref{MainTheorem}. The example is given by positive diagrams without nested\footnote{Meaning that either the inside or the outside of each Seifert circle contains no other Seifert circles.} Seifert circles. Note, however, that any such diagram is alternating.

\begin{proposition}\label{NoNested}
If a positive diagram without nested Seifert circles contains no lone\footnote{A crossing is {\it lone} if it is the unique crossing joining the corresponding pair of Seifert circles.} crossings, then it can be obtained from a zero-crossing diagram by shackle moves and crossing doublings.
\end{proposition}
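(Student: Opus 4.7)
The plan is constructive: I will build $\D$ explicitly from a zero-crossing diagram, using only shackle moves and crossing doublings. Take $\D_0$ to be the Seifert circles of $\D$, regarded as a zero-crossing diagram embedded in the plane at the positions they occupy in $\D$ after every crossing has been Seifert-resolved. All three moves of Figure~\ref{Transformations} preserve the Seifert resolution, so $\D$ and $\D_0$ have identical Seifert-circle arrangements, and the task reduces to inserting the correct bundles of crossings. Positivity ensures that at each bundle location of $\D$ the two incident Seifert circles run along a short pair of parallel coherently-oriented arcs, which is exactly the input pattern for a shackle move.

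Now I would enumerate the bundles $B_1, \ldots, B_m$ of $\D$ and set $k_i := |B_i|$; by the no-lone-crossings hypothesis, $k_i \geq 2$ for every $i$. Processing the bundles in any order, for each $B_i$ I would perform one shackle move in the parallel-arc disk associated with $B_i$, which introduces two positive adjacent crossings, followed by $k_i - 2$ crossing doublings applied within that bundle, which enlarges it to the required multiplicity $k_i$. Distinct bundles of $\D$ occupy pairwise disjoint disks in the plane, so operations performed at different bundles do not interfere. After all the moves are done, the Seifert circles are unchanged, the bundles have the correct multiplicities, and every crossing is positive; since a positive diagram is determined up to planar isotopy by its Seifert-circle arrangement together with its bundle multiplicities, the resulting diagram is $\D$.

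The main obstacle is justifying that each shackle move in this construction can be performed cleanly, namely that the small disk in $\D_0$ where we insert it contains only the two parallel arcs in question and nothing else. This is precisely where the no-nesting hypothesis enters: were a Seifert circle nested inside such a disk, it would block the shackle move from producing the correct local pattern, and one would instead have to shuttle crossings around this obstruction via Artin moves, as in the proof of Theorem~\ref{MainTheorem}. The no-lone-crossings hypothesis, on the other hand, is what permits us to restrict to shackle and doubling alone: since shackle adds two crossings and each doubling adds exactly one more, the minimum bundle multiplicity achievable by these two moves alone is precisely $2$, which is exactly the multiplicity lower bound assumed on $\D$.
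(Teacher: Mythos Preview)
There is a genuine gap. You assume that all crossings joining a fixed pair of Seifert circles $C, C'$ fill a single ``parallel-arc disk'', so that one shackle followed by doublings reproduces that entire bundle, and you then assert that a positive diagram is determined up to planar isotopy by its Seifert-circle arrangement together with its bundle multiplicities. Neither claim is justified, and both fail. The no-nesting hypothesis forbids a Seifert circle from lying inside another \emph{Seifert circle}; it does not forbid one from lying inside a bigon bounded by arcs of $C$, $C'$ and two of their mutual crossings. Concretely: let $C_1, C_2$ be innermost circles sharing four positive crossings $a,b,c,d$ (cyclically ordered in the annulus between them); place a small innermost circle $C_3$ in the region between $a$ and $b$, sharing two crossings with $C_1$, and another innermost circle $C_4$ in the region between $c$ and $d$, also sharing two crossings with $C_1$. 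This diagram has no nested Seifert circles and no lone crossings, yet the $C_1$--$C_2$ crossings fall into two separated twist regions $\{b,c\}$ and $\{d,a\}$. Your procedure, which drops all four $C_1$--$C_2$ crossings at one location, yields a diagram in which neither $C_3$ nor $C_4$ is trapped between $C_1$--$C_2$ crossings---a genuinely different diagram with the same Seifert circles and the same multiplicities.

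The paper's proof avoids exactly this difficulty by working inductively and invoking Lemma~\ref{TrappedCastle} to select an \emph{appropriate} base point $x$: the absence of traps in ${\rm Cas}(x;\D)$, together with no-nesting, is precisely what guarantees that the ladders between the two floors $F_0$ and $F_1$ form a single two-strand twist region. In the example above the appropriate point must lie on $C_3$ (or $C_4$), never on $C_1$, because any castle based at $C_1$ has a trap. One therefore peels off the $C_3$- and $C_4$-bundles first; only after those are smoothed do the four $C_1$--$C_2$ crossings coalesce into a single twist region that a shackle-plus-doublings can create.
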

In fact, the sharpness of~\eqref{R} for these diagrams follows from the following criterion of Y. Diao, G. Hetyei, and P. Liu: for an alternating diagram $\D$, the bounds \eqref{L} and~\eqref{R} are sharp if and only if $\D$ contains no lone crossings \cite{DHL19}.

\begin{figure}
\centering
\includegraphics[width = 10cm]{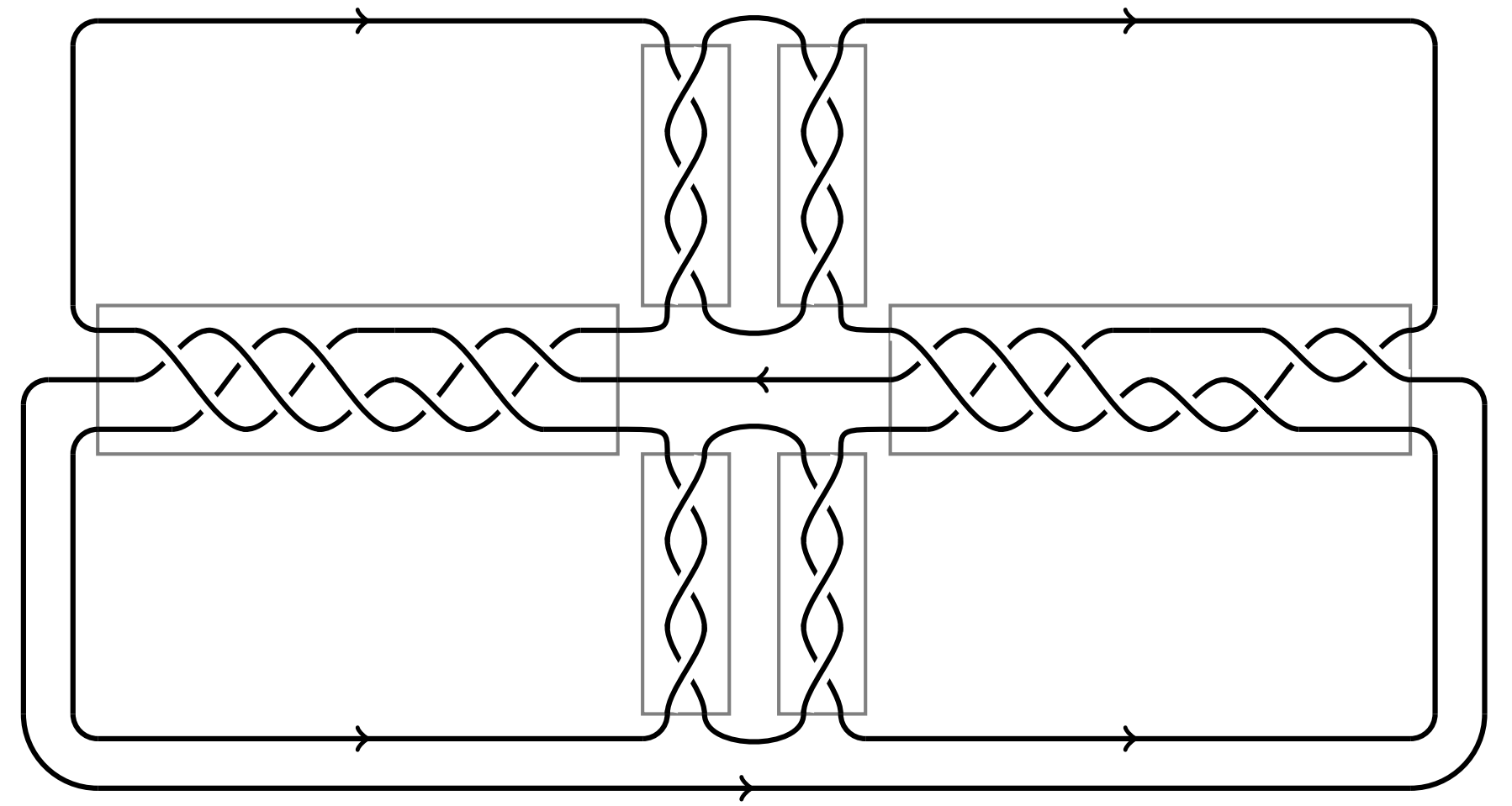}
\captionsource{A positive diagram with full twists.}{Image credit: \cite{Nak21}.}
\label{PositiveKnittedStencil}
\end{figure}

\subsection{Complexity of links}

Here we explicitly state inequalities and minimality results announced in the introduction.

Recall that there is a standard way to associate to a diagram~$\D$ of an oriented link $\L$ an oriented surface $\Sigma_\D$ spanning $\L$, which is called the {\it canonical Seifert surface} of $\D$. This surface is homotopy equivalent to its spine, which is a graph $\Gamma(\D)$, called the {\it Seifert graph} of~$\D$, whose vertices are identified with Seifert circles of $\D$ and whose edges are identified with crossings of $\D$. The Euler characteristic of the surface $\Sigma_\D$ equals $s(\D) - {\rm Cr}(\D)$, thus, its genus satisfies~$2g(\Sigma_\D) = 2 - \#\L + {\rm Cr}(\D) - s(\D),$ where $\#\L$ denotes the number of link components of $\L$. In this terms, the inequality \eqref{U} means ${\rm deg}_z^+\P(\D) - \#\L + 1 \leqslant 2g(\Sigma_\D)$, thus, it provides a bound for the {\it canonical genus}, i.e.~the minimum genus of canonical surfaces among all diagrams of a link. 

We highlight the following classical estimate (see \cite{BZ03}):
\begin{align}\label{ConwayGenusInequality}
\tag{U$^\prime$}
{\rm deg}_z^+\nabla(\D) - \#\L + 1 \leqslant 2g(\L),
\end{align}
where $g(\L)$ is the {\it Seifert genus}, i.e.~the minimum genus among {\it all} oriented surfaces spanning a link~$\L$, and $\nabla$ is Conway's version of the Alexander-Conway polynomial obtained from~$\P$ by the substitution $a = 1$. By combining \eqref{U} and \eqref{ConwayGenusInequality}, we have
\begin{align*}
{\rm deg}_z^+\nabla(\D) - \#\L+1 \leqslant \begin{matrix} {\rm deg}_z^+\mathcal{P}(\D) - \#\L+1, \\ 2g(\L) \end{matrix} \leqslant 2g(\Sigma_\D).
\end{align*}
In fact, for positive diagrams, ${\rm deg}_z^+\nabla(\D) - \#\L+1 = g(\Sigma_\D)$ (see \cite{Cro89}).

Recall that the quantity ${\rm sl}(\D) := \omega(\D) - s(\D)$ is called the {\it self-linking number} of a diagram~$\D$. In this terms, the inequality \eqref{L} means ${\rm sl}(\D) \leqslant {\rm deg}_a^-\P(\D) - 1$, thus, it provides a bound for the {\it maximal self-linking number}, i.e.~the maximum self-linking number among all diagrams of a link.

Denote by $\overline{\D}$ the {\it mirror image} of $\D$, i.e.~the diagram obtained from $\D$ by reversing the types of its crossings. By \cite{LM87}, the HOMFLY-PT polynomial of $\overline{\D}$ is obtained from~$\P(\D)$ by the substitution $a\mapsto -a^{-1}$. In particular, the inequality \eqref{R} provides a bound for the maximal self-linking number as well: ${\rm sl}(\overline{\D}) \leqslant {\rm deg}_a^-\P(\overline{\D}) - 1$.

\begin{corollary}
Suppose a positive diagram $\D$ can be obtained from a zero-crossing diagram by shackle moves, crossing doublings, and Artin moves. Then the self-linking number of the negative\footnote{An oriented diagram is {\it negative} if any of its crossings is negative.} diagram $\overline{\D}$ realizes the maximal self-linking number of the corresponding link.
\end{corollary}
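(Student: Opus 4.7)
The plan is to transfer the sharpness of \eqref{R} for $\D$, granted by Theorem~\ref{MainTheorem}, to the sharpness of \eqref{L} for the mirror $\overline{\D}$, and then to promote this diagrammatic sharpness into maximality of ${\rm sl}(\overline{\D})$ among all diagrams of the underlying link by invoking that the HOMFLY-PT polynomial is a link invariant.

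First, since $\D$ can be obtained from a zero-crossing diagram by shackle moves, crossing doublings, and Artin moves, Theorem~\ref{MainTheorem} yields the sharpness of \eqref{R} for $\D$, namely ${\rm deg}_a^+\P(\D) = \omega(\D) + s(\D) - 1$. Next, I would translate this identity across the mirror. Mirroring flips every crossing sign while preserving the oriented Seifert smoothing, so $\omega(\overline{\D}) = -\omega(\D)$ and $s(\overline{\D}) = s(\D)$, whence ${\rm sl}(\overline{\D}) = -\omega(\D) - s(\D)$. The Lickorish-Millett identity $\P(\overline{\D})(a,z) = \P(\D)(-a^{-1},z)$ recalled above swaps the extremal $a$-degrees up to a sign, giving ${\rm deg}_a^-\P(\overline{\D}) = -{\rm deg}_a^+\P(\D)$. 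Substituting the sharpness of \eqref{R} then produces ${\rm sl}(\overline{\D}) = {\rm deg}_a^-\P(\overline{\D}) - 1$, i.e. the sharpness of \eqref{L} for $\overline{\D}$.

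Finally, for any diagram $\D'$ of the link $\L$ underlying $\overline{\D}$, the bound \eqref{L} yields ${\rm sl}(\D') \leqslant {\rm deg}_a^-\P(\D') - 1 = {\rm deg}_a^-\P(\L) - 1$, since $\P$ is a link invariant. Because $\overline{\D}$ saturates this diagram-independent upper bound, it realizes the maximal self-linking number of $\L$. I do not anticipate any serious obstacle: the corollary is a formal consequence of Theorem~\ref{MainTheorem} together with the standard behaviour of the HOMFLY-PT polynomial, the writhe, and the Seifert circle count under mirroring. The only subtlety worth flagging is keeping the sign conventions straight when converting between $a$-degrees of $\P(\D)$ and of $\P(\overline{\D})$ under $a\mapsto -a^{-1}$.
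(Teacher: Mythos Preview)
Your proposal is correct and matches the paper's approach: the paper does not give a separate proof of this corollary, but the paragraph immediately preceding it spells out exactly the mirror argument you use (that \eqref{R} for $\D$ is equivalent to \eqref{L} for $\overline{\D}$ via $a\mapsto -a^{-1}$), so the corollary is intended as an immediate consequence of Theorem~\ref{MainTheorem} together with the link-invariance of~$\P$. Your write-up simply makes the implicit steps explicit.
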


The estimates \eqref{L} and \eqref{R} imply the following one, usually referred to as the {\it Morton-Franks-Williams inequality}:
\begin{align}\label{MFWInequality}
\tag{LR}
\dfrac{1}{2} \left({\rm deg}_a^+\P(\D) - {\rm deg}_a^-\P(\D)\right) + 1 \leqslant s(\D).
\end{align}
It provides a bound for the minimum Seifert circle number of the corresponding link. By~\cite{Yam87}, such a minimum equals the {\it braid index}, i.e.~the minimum number of strands among all closed braid representatives of a link.

\begin{corollary}
Suppose a positive diagram $\D$ can be obtained from a zero-crossing diagram by shackle moves, crossing doublings, and Artin moves. Then the number of Seifert circles of $\D$ realizes the braid index of the corresponding link.
\end{corollary}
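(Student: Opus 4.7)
The plan is to observe that the hypothesis already forces sharpness of both one-sided $a$-degree bounds \eqref{L} and \eqref{R}, hence of the two-sided Morton--Franks--Williams inequality \eqref{MFWInequality}, after which Yamada's theorem finishes the job. First, since $\D$ arises from a zero-crossing diagram via the three moves of Fig.~\ref{Transformations}, Theorem~\ref{MainTheorem} upgrades \eqref{R} to the equality ${\rm deg}_a^+ \P(\D) = \omega(\D) + s(\D) - 1$. Independently, because $\D$ is positive, the Yokota result \cite{Yok92} recalled in the introduction gives the matching equality ${\rm deg}_a^- \P(\D) = \omega(\D) - s(\D) + 1$ in \eqref{L}.

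Next I would subtract these two identities to eliminate $\omega(\D)$, obtaining
\begin{equation*}
\tfrac{1}{2}\bigl({\rm deg}_a^+\P(\D) - {\rm deg}_a^-\P(\D)\bigr) + 1 = s(\D).
\end{equation*}
Since the left-hand side depends only on the link $\L$ represented by $\D$, applying \eqref{MFWInequality} to any other diagram $\D'$ of $\L$ yields $s(\D) \leqslant s(\D')$. Hence $\D$ realizes the minimum Seifert circle number over all diagrams of $\L$, and by Yamada's theorem \cite{Yam87} this minimum coincides with the braid index of $\L$.

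There is essentially no obstacle: the statement is a bookkeeping corollary of Theorem~\ref{MainTheorem} combined with results already quoted in the introduction. The only mildly delicate point is the algebraic observation that joint sharpness of \eqref{L} and \eqref{R} is equivalent to sharpness of \eqref{MFWInequality}, which is precisely the cancellation of $\omega(\D)$ displayed above.
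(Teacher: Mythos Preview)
Your argument is correct and matches the paper's intended reasoning: the corollary is stated without a separate proof, but the surrounding discussion makes clear it follows exactly as you describe, by combining the sharpness of \eqref{L} for positive diagrams with the sharpness of \eqref{R} from Theorem~\ref{MainTheorem}, then invoking \eqref{MFWInequality} and Yamada's theorem \cite{Yam87}. One small remark: you only need the implication ``sharpness of \eqref{L} and \eqref{R} implies sharpness of \eqref{MFWInequality}'', not the equivalence, so the last sentence slightly overstates what is required.
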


Finally, we state the crossing number minimality result.

\begin{corollary}
Suppose a positive diagram $\D$ can be obtained from a zero-crossing diagram by shackle moves, crossing doublings, and Artin moves. Then the number of crossings of $\D$ realizes the crossing number of the corresponding link.
\end{corollary}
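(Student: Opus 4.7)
The plan is to combine Theorem \ref{MainTheorem} with the known sharpness of the upper and lower bounds for positive diagrams, and then exploit the fact that the left-hand side of \eqref{MFWCrossingNumberBound} is a link invariant.

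First, I would check that $\D$ itself is positive: a zero-crossing diagram is vacuously positive, and each of the three moves in Fig.~\ref{Transformations} introduces only positive crossings and never alters the sign of an existing crossing, so positivity is preserved along the whole construction. Then Theorem \ref{MainTheorem}, applied in the ``if'' direction, tells me that \eqref{R} is sharp on $\D$. Since $\D$ is positive, Cromwell's theorem \cite{Cro89} gives sharpness of \eqref{U} and Yokota's theorem \cite{Yok92} gives sharpness of \eqref{L}. Adding the three equalities yields
$$
{\rm deg}_z^+\P(\D) + \tfrac{1}{2}\bigl({\rm deg}_a^+\P(\D) - {\rm deg}_a^-\P(\D)\bigr) \;=\; {\rm Cr}(\D),
$$
so the bound \eqref{MFWCrossingNumberBound} is sharp on $\D$.

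Next, I would invoke invariance: since $\P$ depends only on the underlying link $\L$, the left-hand side above is an invariant of $\L$. For any other diagram $\D'$ of $\L$, applying \eqref{MFWCrossingNumberBound} to $\D'$ gives
$$
{\rm Cr}(\D) \;=\; {\rm deg}_z^+\P(\D') + \tfrac{1}{2}\bigl({\rm deg}_a^+\P(\D') - {\rm deg}_a^-\P(\D')\bigr) \;\leqslant\; {\rm Cr}(\D'),
$$
so $\D$ minimizes the number of crossings over all diagrams of $\L$, which is precisely the claim.

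There is essentially no obstacle here: once Theorem \ref{MainTheorem} is in hand, the corollary is a routine assembly of that theorem with the classical equalities in \eqref{U} and \eqref{L}, together with the observation that pointwise sharpness of \eqref{MFWCrossingNumberBound} on a single diagram upgrades to global minimality because the left-hand side is a link invariant. All the genuine difficulty is concentrated in Theorem \ref{MainTheorem} itself; the only detail worth spelling out in the written proof is the preservation of positivity under the three moves, which is visible directly from Fig.~\ref{Transformations}.
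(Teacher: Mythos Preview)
Your proposal is correct and matches the paper's own reasoning: the paper does not spell out a separate proof of this corollary but explains in the introduction that for positive diagrams \eqref{U} and \eqref{L} are always equalities (by \cite{Cro89} and \cite{Yok92}), so sharpness of \eqref{MFWCrossingNumberBound} reduces to sharpness of \eqref{R}, which Theorem~\ref{MainTheorem} supplies; invariance of the left-hand side then forces minimality of ${\rm Cr}(\D)$. Your write-up simply makes this implicit argument explicit, including the routine check that the three moves preserve positivity.
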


In general, as Fig. \ref{PositiveBraidReduction} shows, the number of crossings and the number of Seifert circles of a positive diagram without isthmuses may not be minimal.

\begin{figure}[H]
\centering
\begin{tikzpicture}[rotate=90,scale=0.385, every node/.style={scale=0.5}]
\pic[
  rotate=90,
  line width=1.15pt,
  braid/control factor=0,
  braid/nudge factor=0,
  braid/gap=0.11,
  braid/number of strands = 3,
  name prefix=braid,
] at (0,0) {braid={
s_2^{-1}s_1^{-1}s_2^{-1}s_1^{-1}s_2^{-1}
}};
\draw[very thick, draw=blue] (2.6+0.01,-1.62-1.3+0.01) -- (0,-1.62-1.3-1.3-1.3) -- (0, -7.13); 
\draw[very thick, draw=blue] (0,0) -- (0,-1.62) -- (0.44,-1.62-0.44); 
\draw[very thick, draw=blue] (0.85,-2.48) -- (1.75,-3.38); 
\draw[very thick, draw=blue] (2.16,-1.62-1.3-1.3+0.44) -- (2.6+0.01, -1.62-1.3-1.3-0.01); 
\draw[very thick, draw=blue] (0,0) .. controls ++(0,1.5) and ++(0,1.5) .. (-2,0)
    -- (-2,-7.13) .. controls ++(0,-1.5) and ++(0,-1.5) .. (0,-7.13);
\draw [thick,-Straight Barb, draw=blue] (-2,-3.3) -- (-2,-3.2);
\draw[very thick] (1.3,0) .. controls ++(0,3) and ++(0,3) .. (-3.3,0) 
-- (-3.3,-7.13) .. controls ++(0,-3) and ++(0,-3) .. (1.3,-7.13);
\draw [thick,-Straight Barb] (-3.3,-3.3) -- (-3.3,-3.2);
\draw[very thick] (2.6,0) .. controls ++(0,4.5) and ++(0,4.5) .. (-4.6,0)
    -- (-4.6,-7.13) .. controls ++(0,-4.5) and ++(0,-4.5) .. (2.6,-7.13);
\draw [thick,-Straight Barb] (-4.6,-3.3) -- (-4.6,-3.2);
\draw[thick, -To] (-1,-11.7) -- (-1,-12.7);
\pic[
  rotate=90,
  line width=1.15pt,
  braid/control factor=0,
  braid/nudge factor=0,
  braid/gap=0.11,
  braid/number of strands = 2,
  name prefix=braid,
] at (0,-14.63-1.5) {braid={
s_1^{-1}s_1^{-1}s_1^{-1}s_1^{-1}
}};
\draw[very thick, draw=blue] (1.3,-14.63-1.5-0.32-1.3) -- (0,-14.63-1.5-0.32-1.3-1.3) -- (0.44,-14.63-1.5-0.32-1.3-1.3-0.44); 
\draw[very thick, draw=blue] (2.16-1.3,-14.63-1.5-0.32-1.3-1.3-1.3+0.44) -- (2.6-1.3+0.01,-14.63-1.5-0.32-1.3-1.3-1.3-0.01); 
\draw[very thick ] (0,-14.63-1.5) .. controls ++(0,1.5) and ++(0,1.5) .. (-2,-14.63-1.5)
    -- (-2,-14.63-1.5-5.84) .. controls ++(0,-1.5) and ++(0,-1.5) .. (0,-14.63-1.5-5.84);
\draw [thick,-Straight Barb] (-2,-14.63-1.5-3.3) -- (-2,-14.63-1.5-3.2);
\draw[very thick] (1.3,-14.63-1.5) .. controls ++(0,3) and ++(0,3) .. (-3.3,-14.63-1.5) 
-- (-3.3,-14.63-1.5-5.84) .. controls ++(0,-3) and ++(0,-3) .. (1.3,-14.63-1.5-5.84);
\draw [thick,-Straight Barb] (-3.3,-14.63-1.5-3.3) -- (-3.3,-14.63-1.5-3.2);
\end{tikzpicture}
\caption{A reduction of a closed positive braid.}
\label{PositiveBraidReduction}
\end{figure}

\begin{figure}
\centering
\begin{tikzpicture}[rotate=90,scale=0.385, every node/.style={scale=0.5}]
\pic[
  rotate=-90,
  line width=1.15pt,
  braid/control factor=0,
  braid/nudge factor=0,
  braid/gap=0.11,
  braid/number of strands = 6,
  name prefix=braid,
] at (0,0) {braid={
s_4^{-1}
s_3^{-1}-s_5^{-1}
s_4^{-1}
s_1^{-1}-s_4^{-1}
s_1^{-1}-s_3^{-1}-s_5^{-1}
s_1^{-1}-s_4^{-1}
s_4^{-1}
s_3^{-1}-s_5^{-1}
s_2^{-1}-s_4^{-1}
s_1^{-1}-s_3^{-1}
s_2^{-1}s_2^{-1}
s_1^{-1}-s_3^{-1}
s_2^{-1}s_2^{-1}
s_1^{-1}-s_3^{-1}
s_2^{-1}
}};
\end{tikzpicture}
\caption{A positive braid obtained from a trivial one by shackle moves and crossing doublings.}
\label{ShackeExample}
\end{figure}

\subsection{Strongly quasipositive links}%

Here we suggest a possible generalization of Theorem~\ref{MainTheorem} from positive links to strongly quasipositive ones. Recall that strongly quasipositive braids are products of positive Birman-Ko-Lee band generators, and strongly quasipositive links are closures of strongly quasipositive braids. 

By \cite{Rud05}, the canonical Seifert surface of any (positive) link diagram is isotopic to the so-called Bennequin surface of some closed (strongly quasipositive) braid. Such an isotopy preserves, in some sense, the local structure of the diagrams. In particular, if a positive link diagram $\D$ can be obtained from a zero-crossing diagram by shackle moves, crossing doublings, and Artin moves, then the corresponding strongly quasipositive braid can be obtained from the trivial braid with $s(\D)$ strands by adding squares of positive band generators, doublings of band generators, and Birman-Ko-Lee moves\footnote{That is, the standard relations between the band generators in the strongly quasipositive braid monoids.}. The resulting strongly quasipositive braids satisfy the equalities in~\eqref{ConwayGenusInequality}, \eqref{L}, and~\eqref{R}. It may turn out that Theorem~\ref{MainTheorem} is just a special case of some more general result concerning the Alexander-Conway and the HOMFLY-PT polynomials of strongly quasipositive braids.

\begin{conjecture}\label{Conjecture}
For the closure of any strongly quasipositive braid obtained from a trivial braid by adding squares of positive band generators, doublings of band generators, and Birman-Ko-Lee moves,
\begin{enumerate}
\item[(i)] the bound~\eqref{ConwayGenusInequality} is sharp;
\item[(ii)] the bound~\eqref{L} is sharp;
\item[(iii)] the bound~\eqref{R} is sharp.
\end{enumerate}
\end{conjecture}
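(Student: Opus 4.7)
The natural strategy is induction on the number of moves used to build the braid $\beta$ from the trivial braid $\mathbf{1}_n$, closely paralleling the proof of Theorem~\ref{MainTheorem}. The base case $\beta = \mathbf{1}_n$ is immediate: the closure is the $n$-component unlink, the Bennequin surface is a disjoint union of discs, and all three bounds reduce to trivial equalities. For the inductive step one analyses how each of the three moves---adding a square $a_{ij}^2$ of a positive band generator, doubling an existing generator $a_{ij} \mapsto a_{ij}^2$, and applying a Birman-Ko-Lee relation---affects the Bennequin surface on one hand and the Conway and HOMFLY-PT polynomials of the closure on the other. Parts (i), (ii), and (iii) should be treated simultaneously in a single induction, since the necessary control of the skein-tree leaves mixes all three invariants.

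For (i), Rudolph's theorem already guarantees that $\Sigma_\beta$ realises the Seifert genus, so the task reduces to proving the equality ${\rm deg}_z^+\nabla(\widehat{\beta}) = 2g(\Sigma_\beta) + \#\widehat{\beta} - 1$. The change in $\chi(\Sigma_\beta)$ under each move is straightforward to compute ($-2$ for an added square, $-1$ for a doubling, $0$ for a BKL relation), and one would apply the Conway skein at the newly-introduced or doubled bands to produce a recursion for $\nabla(\widehat{\beta'})$ in terms of $\nabla$ of simpler closures, so that the inductive hypothesis controls the top $z$-degree. For (ii) and (iii) the strategy is parallel with the full HOMFLY-PT polynomial: one expresses $\P(\widehat{\beta'})$ as a skein-theoretic combination of $\P$ of simpler closures obtained by resolving the extra band crossings, and checks that the leading $a$-terms realise the left- and right-hand bounds. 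Birman-Ko-Lee moves leave all the polynomials invariant, so for them only the right-hand sides of \eqref{ConwayGenusInequality}, \eqref{L}, and \eqref{R} must be checked; this reduces to verifying that each defining BKL relation preserves the analogues, for band words, of the Seifert-genus complexity and of $\omega \pm s$, which can be done by direct inspection of the finitely many monoid relations.

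The main obstacle, and the reason the statement is only a conjecture, is twofold. First, a single band generator $a_{ij}$ expands to a word with $2(j-i)-1$ crossings of mixed sign in the standard Artin generators, so HOMFLY-PT skein resolutions applied at a doubling or an added square branch into many terms whose leading $a$- and $z$-degrees are, a priori, difficult to compare; showing that only the dominant term survives appears to require genuinely new input beyond the bookkeeping that suffices for positive diagrams in Theorem~\ref{MainTheorem}. Second, the sharpness results of Cromwell and Yokota that reduce \eqref{MFWCrossingNumberBound}-sharpness to \eqref{R}-sharpness in the positive case have no known analogue in the strongly quasipositive setting, so the proof cannot be bootstrapped from Theorem~\ref{MainTheorem}: the equalities in (i), (ii), and (iii) must be obtained simultaneously, and the resulting coupled induction is precisely what makes the argument delicate. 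A satisfactory resolution would likely rest on a more refined model for the interaction between the HOMFLY-PT skein and the band-surface structure than is currently available.
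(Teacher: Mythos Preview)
The statement you are addressing is a \emph{conjecture} in the paper, not a theorem: the paper offers no proof of it, only motivation (the special case coming from positive diagrams via Rudolph's construction, together with Banfield's fibredness result for braids containing the square of the dual Garside element). Consequently there is no ``paper's own proof'' to compare your proposal against.

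Your write-up is not a proof either, and you say so yourself: you sketch an inductive strategy modelled on the proof of Theorem~\ref{MainTheorem} and then explain, in the final paragraph, why the strategy breaks down (mixed-sign expansions of band generators, absence of Cromwell--Yokota-type input in the strongly quasipositive setting). That is a fair informal discussion of the difficulties, but as a \emph{proof proposal} it is vacuous: the inductive step is never carried out, and the obstacles you list are precisely the missing ideas. In particular, for part~(i) your appeal to ``Rudolph's theorem'' to conclude that the Bennequin surface realises the Seifert genus already assumes what is in question---Rudolph's result gives quasipositivity of the surface, not automatic genus-minimality for an arbitrary strongly quasipositive band presentation---so even the reduction you propose is not justified. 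The honest summary is that the statement remains open, the paper does not claim otherwise, and your sketch does not change that.
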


Note that in general the inequality \eqref{ConwayGenusInequality} may be strict\footnote{For example, the positive Whitehead double of the trefoil is a non-trivial strongly quasipositive knot. However, the Alexander-Conway polynomial of any Whitehead double is trivial.} for strongly quasipositive links. Besides, \eqref{L} may be strict as well (see \cite[Example~4.2]{HS15} and \cite[Example~3]{St05}).

An interesting class of strongly quasipositive braids satisfying requirements of Conjecture~\ref{Conjecture} is of those containing the square of the so-called dual Garside element. By \cite{Ban22}, any such strongly quasipositive braid can be obtained from such a square by doublings of band generators and Birman-Ko-Lee moves (cf. Fig \ref{ObtainingBraisWithFullTwist}). 
According to I. Banfield, since at the level of Bennequin surfaces, doublings of band generators may be realized as plumbings of positive Hopf bands and Birman-Ko-Lee moves may be realized as isotopy, the closures of such braids are fibered.

There is an explanation of the sharpness of the bound~\eqref{L} for positive diagrams in terms of contact knot theory (see \cite{Etn05}). According to \cite{HS15}, if a link admits a Legendrian representative spanning a Lagrangian filling, i.e.~a properly embedded Lagrangian submanifold of the four-ball, then the bound is sharp. Besides, the Legendrian representatives of positive diagrams are fillable. It would be interesting to find out whether links satisfying requirements of Conjecture \ref{Conjecture} admit fillable Legendrian representatives.

As for the other part of Theorem~\ref{MainTheorem}, we do anticipate a similar criterion for the sharpness of \eqref{R} for strongly quasipositive links but expect that it might be more complex.

The paper is organized as follows. 
In Section 2, we recall the construction of castles from \cite{DHL19} and prove Proposition \ref{NoNested}. Section 3 aims to adapt resolution trees for the HOMFLY-PT polynomial introduced in \cite{DHL19}. In Section 4, we prove Theorem \ref{MainTheorem}.

\subsection*{Acknowledgements}

This paper is a revised version of the author's Master's Thesis.
The author is deeply grateful to his advisor Andrei Malyutin for his inspiration and support.
Besides, the author would like to thank the participants of the Low-dimensional topology student seminar of the Leonhard Euler International Mathematical Institute in Saint Petersburg for their attention to this work.
Lastly, the author thanks Danil Nigomedyanov for a thorough reading of the Master’s Thesis.

\section{Link diagram scrutiny}

A {\it link} $\L$ is a smooth one-dimensional submanifold of the euclidean space $\mathbb{R}^3$. Connected components of $\L$ are called its {\it link components}. A projection of $\L$ onto a plane is {\it regular} if it has only finitely many singular points that are all transverse double points. A {\it diagram}~$\D$ of~$\L$ is its regular projection that has relative height information added to it at each of the double points. Namely, a break in the line corresponding to the thread that passes underneath. A {\it crossing} of $\D$ is a broken double point. If $\L$ is oriented, one can naturally induce an orientation to $\D$.

We embed each projection plane into its one-point compactification (homeomorphic to a two-sphere), which we denote by $\S$, and endow it with an orientation. In particular, each Seifert circle cuts $\S$ into two closed disks. Besides, if the link diagram is oriented, by comparing the orientation of a Seifert circle induced from it with the orientation of $\S$, we can distinguish the left disk from the right one. A Seifert circle is {\it loose on the left} (resp. {\it loose on the right}) if its spanning disk that lies on the left (resp. on the right) contains no other Seifert circles. Finally, a Seifert circle is {\it innermost} if at least one of its spanning disks contains no other Seifert circles.

For our purposes, we consider link diagrams up to orientation-preserving homeomorphisms of the two-sphere $\S$.

\subsection{Descents and ascents}

By a {\it point on a diagram} $\D$ we mean a point lying on a Seifert circle of $\D$. Let $x_1,x_2,\ldots,x_m$ be a sequence of points on~$\D$, one on each link component of~$\D$, where~$m := \#\D$. We refer to~$x_1,x_2,\ldots,x_m$ as {\it base points}.

We travel through $\D$ by moving along each link component of $\D$ as follows. We begin at the first base point $x_1$ and move according to the orientation. As we reach $x_1$ again, we proceed to the second marked point $x_2$ and begin moving according to the orientation. We continue in the same way until we visit the link component containing $x_m$ entirely.

During the above travel, we visit each crossing of $\D$ exactly twice. A crossing of $\D$ is {\it descending} (resp.\  {\it ascending}) if one travels along the overpassing (resp.\  underpassing) strand first. The diagram~$\D$ is {\it descending} (resp.\  {\it ascending}) if each crossing of~$\D$ is descending (resp.\  ascending). Of course, these properties depend on the choice of the base points.

\begin{figure}[H]
\centering
\begin{tikzpicture}[rotate=90,scale=0.385, every node/.style={scale=0.5}]
\pic[
  rotate=90,
  line width=1.15pt,
  braid/control factor=0,
  braid/nudge factor=0,
  braid/gap=0.11,
  braid/number of strands = 3,
  braid/strand 1/.style={blue},
  name prefix=braid,
] at (0,0) {braid={
s_1s_2s_2^{-1}s_2s_1^{-1}s_2^{-1}s_2s_1^{-1}s_1^{-1}
}};
\draw[very thick, draw=blue] (2.6,0) -- (2.6,-1.62);
\draw[very thick, draw=blue] (2.16,-2.06) -- (2.61,-1.61);
\draw[very thick, draw=blue] (0.83+0.47+0.47,-3.39) -- (0.83+0.47,-3.39+0.47) -- (1.73,-2.49);
\draw[very thick, draw=blue] (2.6+0.01,-2.92-1.3-0.01) -- (2.17,-2.49-1.3);
\draw[very thick, draw=blue] (2.16,-2.06-2.6) -- (2.61,-1.61-2.6);
\draw[very thick, draw=blue] (0.47,-3.39-2.6-0.83-2.6-0.47) -- (0,-3.39-2.6-0.83-2.6) -- (0.83-0.83,-3.39-2.6-0.83) -- (1.73,-2.49-2.6);
\draw[very thick] (-2,0) -- (-2,-4.5-7.8);
\draw[very thick] (-3.3,0) -- (-3.3,-4.5-7.8);
\draw[very thick] (-4.6,0) -- (-4.6,-4.5-7.8);
\draw[very thick] (0,0) .. controls ++(0,1.5) and ++(0,1.5) .. (-2,0)
    -- (-2,-4.5-7.8) .. controls ++(0,-1.5) and ++(0,-1.5) .. (0,-4.5-7.8);
\draw [thick,-Straight Barb] (-2,-2.35-3.9) -- (-2,-2.25-3.9);
\draw[very thick ] (1.3,0) .. controls ++(0,3) and ++(0,3) .. (-3.3,0)
    -- (-3.3,-4.5-7.8) .. controls ++(0,-3) and ++(0,-3) .. (1.3,-4.5-7.8);
\draw [thick,-Straight Barb] (-3.3,-2.35-3.9) -- (-3.3,-2.25-3.9);
\draw[very thick, draw=blue] (2.6,0) .. controls ++(0,4.5) and ++(0,4.5) .. (-4.6,0)
    -- (-4.6,-4.5-7.8) .. controls ++(0,-4.5) and ++(0,-4.5) .. (2.6,-4.5-7.8);
\draw [draw=blue,thick,-Straight Barb] (-4.6,-2.35-3.9) -- (-4.6,-2.25-3.9);
\filldraw[blue] (0,0) circle (3pt) {};
\node[scale=2] at (0.5,0) {$p$};
\end{tikzpicture}
\caption{A maximal descending path.}
\label{StandardBasePointsExample}
\end{figure}

It is easy to see that if $\D$ is descending (resp.\  ascending), then its components represent the unknots and are layered from top to bottom (resp.\  from bottom to top). In this case, $\D$ represents the unlink\footnote{A link is called an {\it unlink} if it admits a diagram with zero crossings.} with precisely $m$ link components.

Next, let $\D$ be an arbitrary link diagram, and suppose we travel through $\D$ as above. Given a point $p$ on $\D$, let the {\it maximal descending path} be the longest path starting at~$p$ that we traveled before meeting either~$p$ or an ascending crossing (see Fig. \ref{StandardBasePointsExample}). We define the {\it maximal ascending path} similarly. 

\subsection{Castles}

We introduce castles, which are additional structures for link diagrams, by adapting the construction of Y. Diao, G. Hetyei, and P. Liu \cite{DHL19}.

For this section, there is no necessity to specify crossing types. In particular, we interpret a crossing as an arc joining two Seifert circles.

\subsubsection{The construction}

A castle consists of several segments on Seifert circles, called floors, and several crossings between them, called ladders. Each floor is endowed with a non-negative integer referred to as its {\it level}. According to the definition, each castle has a unique floor of level zero and may have floors of higher levels.

We say that a point $x$ on $\D$ is {\it innermost} if the Seifert circle of $\D$ containing $x$ is innermost.

Let $x$ be an innermost point on~$\D$. We describe an algorithm for constructing the castle~${\rm Cas}(x;\D)$ of~$\D$ determined by~$x$. The construction is inductive, so we build each floor step by step.

At first, we construct the unique floor of level $0$. Denote by $C$ the innermost Seifert circle of $\D$ containing $x$. Starting at~$x$ and following the orientation of $C$, let us order all crossings of $\D$ incident to~$C$. According to the definition, the {\it floor of level~$0$} is the segment on $C$ whose boundary points are~$p_0:=x$ and the point~$q_0$ located immediately after the last crossing incident to~$C$. 

To construct floors of level $k \in \{1,2,\ldots\}$, we proceed as follows.

We assume that all floors of level $k-1$ have already been built. Let $F$ be any of them and denote by $p_{k-1}$ and $q_{k-1}$ the starting and the ending points of $F$, respectively. We construct all floors of level~$k$ incident to $F$ as follows. 

Let $C^\prime$ be a Seifert circle of $\D$ that shares at least one crossing with $F$ and does not contain floors of level $k-2$. Starting at~$p_{k-1}$ and following the orientation of $F$, let us order all of the mutual crossings. Let $p_k$ and $q_k$ be two points on~$C^\prime$ immediately before the first such crossing and immediately after the last one, respectively. The segment of~$C^\prime$ starting at $p_k$ and ending at~$q_k$ is a {\it floor of level~$k$}.

In the same way, we build floors of level~$k$ for all other Seifert circles of~$\D$ that share crossings with~$F$, which may lie on both sides of $F$. After that, we repeat the above construction for all floors of level $k-1$. This completes the definition of floors of level $k$. The construction of the floors continues until, for each floor $F$ of maximal level, any Seifert circle sharing crossings with $F$ contains a floor of level one less. 

\begin{figure}[H]
\centering
\includegraphics[width = 15.3cm]{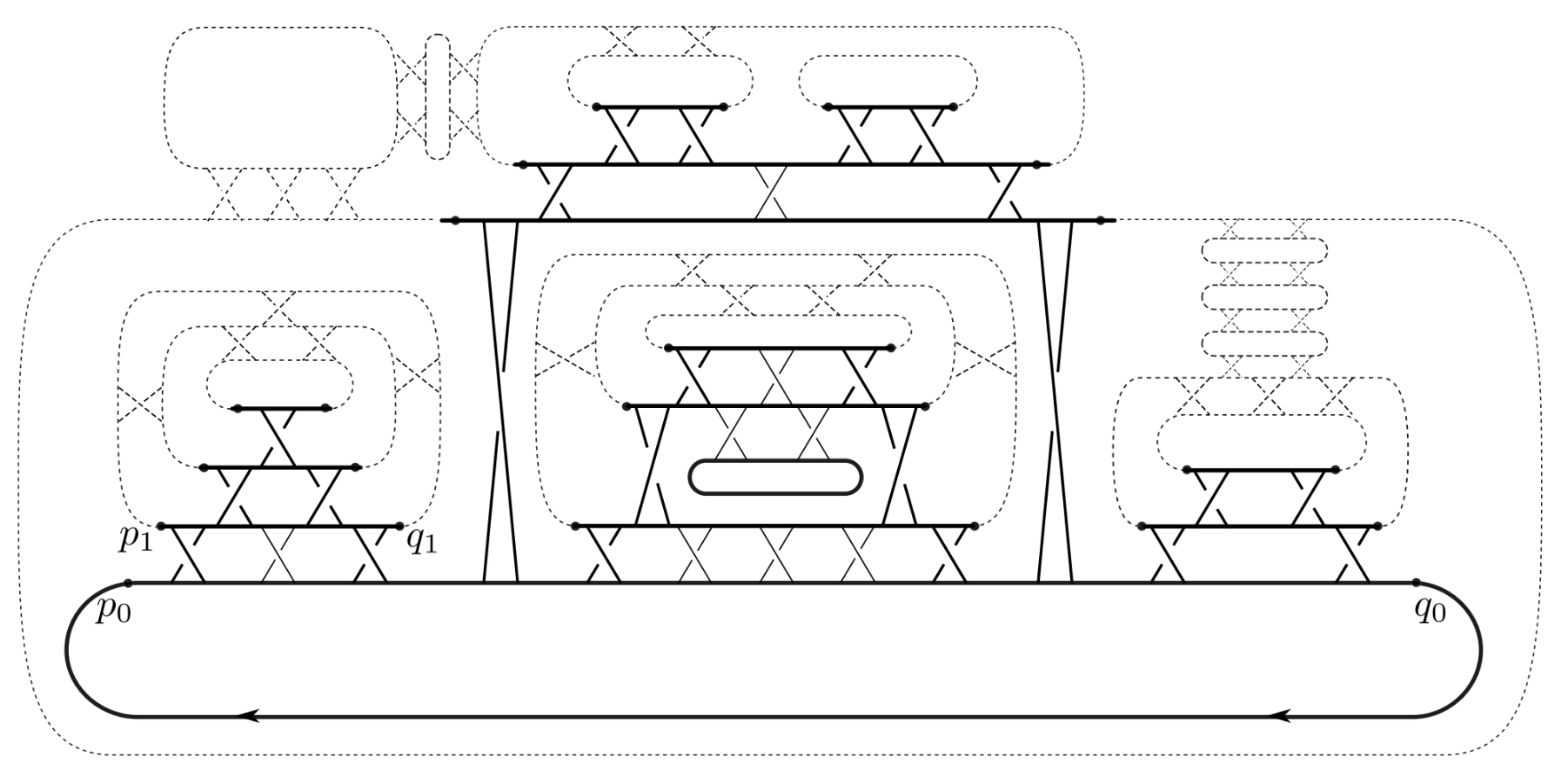}
\captionsource{A castle.}{Image credit: \cite{DHL19}.}
\label{Castle}
\end{figure}

A crossing $s$ of $\D$ is called a {\it ladder} if it joins two floors of the castle. According to the definition, the castle ${\rm Cas}(x;\D)$ is the collection of all such floors and ladders. This completes definition.

Finally, it is worth mentioning that there may be crossings of $\D$ that are not ladders of the castle and Seifert circles of $\D$ that do not contain floors of the castle. For instance, see dashed objects in~Fig.~\ref{Castle}. 

\subsubsection{Traps}

Given two floors of a castle, we order their common ladders according to the orientation. Two such distinct ladders are {\it adjacent} if they are consecutive in that order.

\begin{definition}\label{Braces}
Let $s_1$ and $s_2$ be adjacent ladders joining two floors $F_1$ and $F_2$. Denote by~$F_1^\prime$ and~$F_2^\prime$ the two segments of, respectively, $F_1$ and $F_2$ that join~$s_1$ and~$s_2$. The quadruple~$(s_1,s_2,F_1^\prime,F_2^\prime)$ is called a {\it brace}. The circle formed of~$s_1$,~$s_2$,~$F_1^\prime$, and~$F_2^\prime$ cuts the two-sphere into two closed disks. The disk that does not contain the base point $x$ is called the {\it inside} of the brace. A brace is called a {\it trap} if its inside contains at least one floor of the castle.
\end{definition}

For instance, any two ladders of the form 
\begin{tikzpicture}[rotate=90, scale=0.9, baseline=1]
\draw[thick] (0,0.3+0.3)--(0.3,0+0.3);
\draw[thick] (0.1,0.1+0.3)--(0,0+0.3);
\draw[thick] (0.3,0.3+0.3)--(0.2,0.2+0.3);
\draw[->] (0,0.3)--(0.3,0);
\draw[->] (0.1,0.1)--(0,0);
\draw[thick] (0,0.3)--(0.3,0);
\draw[thick] (0.1,0.1)--(0,0);
\draw[thick] (0.3,0.3)--(0.2,0.2);
\end{tikzpicture}
determine a brace. However, in general, there may be some ladders attached to the segments $F_1^\prime$ and $F_2^\prime$ of a brace. The brace is not a trap if and only if the only such ladders are outside of the brace. Therefore, the local structure of a castle without traps is similar to that of a braid.

See the blue brace in Fig.~\ref{TrappedCastleExample} for an example of a trap. Besides, the castle shown in Fig.~\ref{Castle} has two traps.

\begin{definition}\label{AppropriatePoints}
An innermost point $x$ is {\it appropriate} if ${\rm Cas}(x;\D)$ has no traps.
\end{definition}

The difference between our version of castles and the one of \cite{DHL19} is that we consider link diagrams on the two-sphere and take it into account when giving definitions. 
In \cite[Lemma~4.3]{DHL19}, Y. Diao, G. Hetyei, and P. Liu claim that any link diagram contains at least one appropriate point. However, there are flaws in their proof. In Lemma \ref{TrappedCastle} below, we correct it.

\begin{lemma}\label{TrappedCastle}
Any oriented link diagram contains an appropriate point.
\end{lemma}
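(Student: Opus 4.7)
The plan is a minimality argument in the spirit of~\cite{DHL19}. For each innermost point $x$ of $\D$, let
\[
\mu(x) := \#\{C : C \text{ is a Seifert circle of } \D \text{ lying strictly inside some trap of } {\rm Cas}(x;\D)\},
\]
and suppose toward contradiction that no innermost point is appropriate, so $\mu(x) \geq 1$ for every innermost $x$. Pick $x_0$ minimizing $\mu$, and let $T$ be a trap of ${\rm Cas}(x_0;\D)$ whose inside disk $D$ contains at least one Seifert circle of~$\D$.

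The geometric input I would exploit is that $\partial D$ is composed of the two ladders $s_1, s_2$ of the brace defining $T$ together with two floor-arcs $F_1', F_2'$ lying on the Seifert circles $C_1, C_2$ through $F_1, F_2$. Since Seifert circles of $\D$ are pairwise disjoint simple closed curves in $\S$, any Seifert circle distinct from $C_1, C_2$ lies either wholly inside $D$ or wholly outside it. Among the finite, nested family of Seifert circles inside $D$, I would pick a member $C^*$ that is innermost in the ambient diagram $\D$, in the sense that one of its two side-disks in $\S$, namely the one contained in $D$, hosts no other Seifert circle. Let $x^*$ be any innermost point on $C^*$; by construction $x^*$ is an innermost point of $\D$ located strictly inside $D$.

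It then suffices to show $\mu(x^*) < \mu(x_0)$, which contradicts the choice of $x_0$ and closes the proof. The intuition is that, with the new base point $x^*$ inside $D$, the roles of ``inside'' and ``outside'' of the old brace $T$ swap: every Seifert circle previously counted by $\mu$ because of its position inside $T$ ceases to be trapped from the viewpoint of $x^*$, while the innermost choice of $C^*$ within $D$ forbids ${\rm Cas}(x^*;\D)$ from producing a new trap enclosing a Seifert circle not already inside $T$. The main obstacle is making this base-point swap rigorous: one has to analyze how the castle construction depends on the base point and verify that every inside disk of a trap of ${\rm Cas}(x^*;\D)$ lies, in $\S$, in the complement of $D$ together with a region adjacent to $C^*$ that is provably devoid of Seifert circles. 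This is precisely where the present paper flags a gap in~\cite[Lemma~4.3]{DHL19}, and I would handle it by tracking floor levels and ladder adjacencies as the base point is transported from $x_0$ to $x^*$, using crucially that the castle is always built outward on the two-sphere from the level-$0$ floor.
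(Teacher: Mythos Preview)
Your proposal has a genuine gap at exactly the point you flag as the ``main obstacle'': the claim $\mu(x^*) < \mu(x_0)$ is neither proved nor plausibly true for your choice of $x^*$. Placing the new base point on an arbitrary innermost Seifert circle inside the trap $T$ gives you no control over how ${\rm Cas}(x^*;\D)$ looks \emph{outside} $D$. As the castle grows outward from $C^*$, the floors it lays on the boundary circles $C_1$, $C_2$ need not be contained in the arcs $F_1'$, $F_2'$; they may spill far beyond the old brace, and the resulting traps of ${\rm Cas}(x^*;\D)$ can enclose Seifert circles that were trapped by no brace of ${\rm Cas}(x_0;\D)$. So $\mu$ can go \emph{up}, and the minimality argument collapses. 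Your secondary intuition --- that Seifert circles formerly inside $T$ cease to be trapped from the viewpoint of $x^*$ --- is also unjustified: nested traps inside $D$ may persist.

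The paper's proof addresses precisely this by \emph{not} taking an arbitrary innermost point inside $D$. It first singles out a Seifert circle $C'\subset D$ that actually shares crossings with one of the brace floors (say $F_1$), then passes to an innermost $C''$ on the far side of $C'$, and --- crucially --- selects the new base point $y$ on $C''$ subject to the condition that every floor of ${\rm Cas}(y;\D)$ meeting $F_1$ lies within the arc $F_1'$. Establishing that such a $y$ exists is a separate inductive argument on the distance from $C''$ to $C'$ in the Seifert graph. With this ``derived'' point in hand, one can show that every trap of ${\rm Cas}(y;\D)$ is either already a trap of ${\rm Cas}(x;\D)$ or is contained in $D$, and the termination measure is the number of floors lying \emph{outside} the current trap, which strictly decreases. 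Your sketch contains none of these ingredients; the phrase ``tracking floor levels and ladder adjacencies'' does not substitute for the derived-point construction and its inner induction.
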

\begin{proof}
Let $x$ be an innermost point on a diagram $\D$. We suppose $x$ is not appropriate and derive from $x$ other such points. After that, we show that any sequence of such derivations leads to an appropriate point.

Suppose the castle~${\rm Cas}(x;\D)$ has at least one trap. Let $(s_1,s_2,F_1^\prime,F_2^\prime)$ be the corresponding brace. By definition, its inside $\mathbb{D}$ contains at least one floor of the castle. Let~$C^\prime$ be a Seifert circle of $\D$ lying within $\mathbb{D}$ such that~$C^\prime$ contains a floor sharing crossings with either~$F_1$ or~$F_2$. The circle $C^\prime$ cuts the two-sphere into two closed regions. Let $C^{\prime\prime}$ be an arbitrary innermost Seifert circle of~$\D$ lying within the region that does not contain the base point $x$ (see Fig.~\ref{TrappedCastleExample}). It might be $C^{\prime\prime} = C^{\prime}$.

\begin{figure}
\centering
\begin{tikzpicture}[scale=0.8, every node/.style={scale=1}]
\draw[very thick, rounded corners=8pt, draw=blue] (0, 0) rectangle (18, 7) {}; 
\draw[draw=white, fill=white] (1.3,7-0.1) rectangle (16.7,7+0.1) {}; 
\draw[very thick, rounded corners=8pt, draw=blue] (1, 1) rectangle (17, 2.1) {};
\draw[draw=white, fill=white] (2.2,1-0.1) rectangle (15.8,1+0.1) {}; 
\draw[draw=black, very thick] (2.2,1) -- (15.8,1) {}; 
\draw[draw=white, fill=white] (2.2,0-0.1) rectangle (15.8,0+0.1) {}; 
\draw[draw=black, very thick] (2.2,0) -- (15.8,0) {}; 
\draw[fill=white, draw=white] (2.6, -0.1) rectangle (5.4, 1.1) {}; 
\draw[thin, draw=black, dashed] (2.5, 0) -- (5.5, 0) {};
\draw[thin, draw=black, dashed] (2.5, 1) -- (5.5, 1) {};
\draw[draw=black, very thick, draw=blue] (1.8, 0) -- ++(0.4, 1) -- ++(-0.4, 0) -- ++(0.4, -1) {};
\draw[draw=black, very thick] (5.8, 0) -- ++(0.4, 1) -- ++(-0.4, 0) -- ++(0.4, -1) {};
\draw[draw=black, very thick] (7.8, 0) -- ++(0.4, 1) -- ++(-0.4, 0) -- ++(0.4, -1) {};
\draw[draw=black, very thick] (9.8, 0) -- ++(0.4, 1) -- ++(-0.4, 0) -- ++(0.4, -1) {};
\draw[draw=black, very thick] (11.8, 0) -- ++(0.4, 1) -- ++(-0.4, 0) -- ++(0.4, -1) {};
\draw[draw=black, very thick] (13.8, 0) -- ++(0.4, 1) -- ++(-0.4, 0) -- ++(0.4, -1) {};
\draw[draw=black, very thick, draw=blue] (15.8, 0) -- ++(0.4, 1) -- ++(-0.4, 0) -- ++(0.4, -1) {};
\filldraw[black] (2.6,1) circle (2pt) {};
\filldraw[black] (2.6,0) circle (2pt) {};
\filldraw[black] (5.4,1) circle (2pt) node[anchor=south]{$x$};
\filldraw[black] (5.4,0) circle (2pt) {};
\draw[thin, rounded corners=8pt, draw=black, dashed] (1, 3) rectangle (11, 6) {};
\draw[draw=black, very thick] (1.3,6) -- (10.7,6) {}; 
\draw[draw=black, very thick] (1.5, 6) -- ++(0.4, 1) -- ++(-0.4, 0) -- ++(0.4, -1) {};
\draw[draw=black, very thick] (3.5, 6) -- ++(0.4, 1) -- ++(-0.4, 0) -- ++(0.4, -1) {};
\draw[draw=black, very thick] (6.5, 6) -- ++(0.4, 1) -- ++(-0.4, 0) -- ++(0.4, -1) {};
\draw[draw=black, very thick] (8.1, 6) -- ++(0.4, 1) -- ++(-0.4, 0) -- ++(0.4, -1) {};
\draw[draw=black, very thick] (10.1, 6) -- ++(0.4, 1) -- ++(-0.4, 0) -- ++(0.4, -1) {};
\filldraw[black] (1.3,6) circle (2pt) {};
\filldraw[black] (10.7,6) circle (2pt) {};
\draw[thin, rounded corners=8pt, draw=black, dashed] (2, 4) rectangle (8, 5) {};
\draw[draw=black, very thick] (2.5, 5) -- ++(0.4, 1) -- ++(-0.4, 0) -- ++(0.4, -1) {};
\draw[draw=black, very thick] (4.5, 5) -- ++(0.4, 1) -- ++(-0.4, 0) -- ++(0.4, -1) {};
\draw[draw=black, very thick] (7.1, 5) -- ++(0.4, 1) -- ++(-0.4, 0) -- ++(0.4, -1) {};
\filldraw[black] (2.3,5) circle (2pt) {};
\filldraw[black] (7.7,5) circle (2pt) node[anchor=north]{$y$};
\draw[draw=black, very thick] (2.3,5) -- (7.7,5) {}; 
\draw[thin, rounded corners=8pt, draw=black, dashed] (8.8, 4) rectangle (10.2, 5) {};
\draw[draw=black, very thick] (9.3, 5) -- ++(0.4, 1) -- ++(-0.4, 0) -- ++(0.4, -1) {};
\filldraw[black] (9.1,5) circle (2pt) {};
\filldraw[black] (9.9,5) circle (2pt) {};
\draw[draw=black, very thick] (9.1,5) -- (9.9,5) {}; 
\draw[thin, rounded corners=8pt, draw=black, dashed] (12, 3) rectangle (17, 6) {};
\draw[draw=black, very thick] (12.5, 6) -- ++(0.4, 1) -- ++(-0.4, 0) -- ++(0.4, -1) {};
\draw[draw=black, very thick] (14.3, 6) -- ++(0.4, 1) -- ++(-0.4, 0) -- ++(0.4, -1) {};
\draw[draw=black, very thick] (16.1, 6) -- ++(0.4, 1) -- ++(-0.4, 0) -- ++(0.4, -1) {};
\filldraw[black] (12.3,6) circle (2pt) {};
\filldraw[black] (16.7,6) circle (2pt) {};
\draw[draw=black, very thick] (12.3,6) -- (16.7,6) {}; 
\draw[draw=blue, very thick] (1.2,7) -- (16.8,7) {}; 
\node at (9,1.3) {$F_2$};
\node[text=blue] at (9,2.45) {$F_2^\prime$};
\node at (9,0.3) {$F_1$};
\node at (3,4.3) {$C^{\prime\prime}$};
\node at (2,3.3) {$C^{\prime}$}; 
\node[text=blue] at (17.6,4.3) {$F_1^\prime$}; 
\node[text=blue] at (1.6,0.5) {$s_2$};
\node[text=blue] at (16.4,0.5) {$s_1$};
\draw [-Straight Barb] (6.7,0) -- (6.8,0); 
\draw [-Straight Barb] (6.7,1) -- (6.8,1); 
\draw [blue, -{Straight Barb}] (0.8,7) -- (0.7,7); 
\draw [blue, -{Straight Barb}] (17.4,7) -- (17.3,7); 
\draw [-{Straight Barb}] (13.6,6) -- (13.5,6); 
\draw [-{Straight Barb}] (5.6,6) -- (5.5,6); 
\draw [-{Straight Barb}] (5.6,5) -- (5.5,5); 
\draw [blue, -{Straight Barb}] (6.9,2.1) -- (6.8,2.1); 
\end{tikzpicture}
\caption{A trap of a castle.}
\label{TrappedCastleExample}
\end{figure}
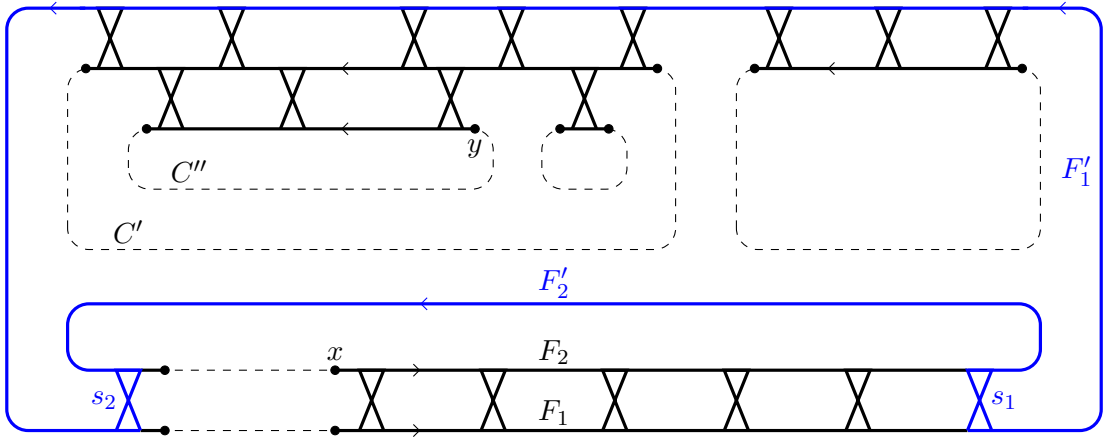

Without loss of generality, $C^\prime$ shares crossings with $F_1$. We say that a point~$y$ on~$C^{\prime\prime}$ is {\it derived from $x$} if any floor of ${\rm Cas}(y;\D)$ intersecting $F_1$ lies within $F_1^\prime$.

We claim that there exists at least one point derived from $x$.

Let us prove the claim. Let $y^\prime$ be an arbitrary point on $C^{\prime\prime}$. We suppose the castle~${\rm Cas}(y^\prime;\D)$ contains a floor intersecting~$F_1$, and we find below another suitable point.

Since $C^\prime$ separates $C^{\prime\prime}$ from $F_1$, the Seifert circle $C^\prime$ contributes a floor to the castle~${\rm Cas}(y^\prime;\D)$. Denote by $C_0, C_1, \ldots, C_m$ the Seifert circles of $\D$ forming the shortest path in the Seifert graph $\Gamma(\D)$ from~$C^{\prime\prime}$ to~$C^{\prime}$ such that~$C_0 = C^{\prime\prime}$, $C_m = C^\prime$, and each $C_i$ contributes a floor to ${\rm Cas}(y^\prime;\D)$. To derive at least one point $y$ from $x$, we proceed by induction on $m$.

At first, suppose $m=0$. In this case, $C^{\prime\prime} = C^\prime$. Following the orientation of~$F_1^\prime$, let us order all the crossings joining $C^\prime$ and $F_1^\prime$. Let $y$ be a point on $C^\prime$ immediately before the first such crossing. It is easy to see that $y$ fits the required condition.

Next, suppose $m \geqslant 1$. The Seifert circle $C_1$ cuts the two-sphere into two closed regions. First, let us smooth all crossings of $\D$ lying within the region that does not contain the base point $x$. Second, let us delete all Seifert circles lying within the interior of this region. Denote by $\D^\prime$ the resulting link diagram. By the induction hypothesis applied to $\D^\prime$, there is at least one point~$y^\prime$ on the Seifert circle $C_1$ of $\D^\prime$ such that $y^\prime$ is derived from $x$.

Starting at $y^\prime$ and following the orientation of~$C_1$, let us order all the crossings joining~$C_0$ and~$C_1$. We set~$y$ to be a point on~$C_0$ immediately before the first such crossing.

We are in the position of proving that $y$ is derived from~$x$. By the definition of $y$, any ladder of~${\rm Cas}(y^\prime;\D)$ that joins $C_1$ and $C_2$ is a ladder of ${\rm Cas}(y;\D)$. It follows by induction that for each index~$k \in \{2,3,\ldots,m+1\}$, if the castle~${\rm Cas}(y;\D)$ has a floor~$F$ corresponding to $C_k$, then the castle~${\rm Cas}(y^\prime;\D)$ has a floor containing $F$ (here $C_{m+1}$ stands for the Seifert circle containing $F_1$). In particular, the claim is proved.

It remains to show that by using the derivations described above, one obtains an appropriate pair. Since the Seifert graph $\Gamma(\D)$ is bipartite, $C^\prime$ and $F_2$ share no crossings. Thus, the castle~${\rm Cas}(y;\D)$ has no floors intersecting $F_2$. Therefore, the derivation decreases the number of floors in the complement of the interior of the current trap.
By the construction, any trap of ${\rm Cas}(y;\D)$ is either a trap of ${\rm Cas}(x;\D)$ or is contained within $\mathbb{D}$. Therefore, the new traps are contained in the current one, and thus, the derivations lead to a castle without traps. The lemma is proved.
\end{proof}

\subsubsection{Towers}

A sequence $F_0, F_1, \ldots, F_m$ of floors of the castle is called a {\it tower} if each $F_k$ is of level $k$, any two consecutive floors share at least one crossing, and $F_m$ is a floor of maximal level, i.e.~$F_m$ shares no crossings with floors of level $m+1$.

\begin{definition}\label{CoherentSequences}
Two Seifert circles are {\it coherent} if they are homologous in the annulus bounded by them. A sequence of Seifert circles is {\it coherent} if any two of its elements are coherent and any two consecutive ones share at least one crossing.
\end{definition}

In general, Seifert circles corresponding to floors lying in a tower may not be coherent. For example, the castle shown in Fig. \ref{Castle} has floors whose Seifert circles are incoherent with the Seifert circle corresponding to the zero level floor.

\begin{observation}\label{NoTrapsImpliesCoherence}
Suppose a castle has no traps. Then the Seifert circles corresponding to the floors in each of its towers form a coherent sequence.
\end{observation}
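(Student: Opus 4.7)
The plan is to proceed by induction on the length $m$ of the tower $F_0,\ldots,F_m$, with $C_k$ denoting the Seifert circle containing $F_k$. The base case $m=0$ is immediate, since a singleton sequence is vacuously coherent. For the inductive step I assume the observation for all shorter sub-towers; in particular, the prefix $F_0,\ldots,F_{m-1}$ already yields a coherent sequence $C_0,\ldots,C_{m-1}$. What remains is to show that $C_m$ is coherent with each $C_i$ for $i<m$.

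The task splits into two parts. The first, and main, part is to establish that $C_{m-1}$ and $C_m$ are coherent. The key input is the no-traps hypothesis together with the castle-construction rule that $C_m$ is chosen so as not to contain any floor of level $m-2$, which forces $C_m$ onto the side of $C_{m-1}$ opposite $C_{m-2}$. If $F_{m-1}$ and $F_m$ share at least two ladders, I fix adjacent ladders $s_1,s_2$ with brace $(s_1,s_2,F_{m-1}',F_m')$ and argue by contradiction. In the incoherent case, the orientations of $F_{m-1}'$ and $F_m'$ along the brace are anti-parallel, so the castle rule (``$F_m$ starts just before the first ladder in the order along $F_{m-1}$ and ends just after the last one, traversing $C_m$ in its own orientation'') forces $F_m$ to run as the long arc of $C_m$. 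This long arc must then pick up ladders on the side of $C_{m-1}$ that contains $C_{m-2}$, which either violates the castle rule forbidding $C_m$ from containing floors of level $m-2$ or forces the inside of the brace to engulf $F_{m-2}$, creating a trap. The case of a single ladder between $F_{m-1}$ and $F_m$ is handled by the same orientation analysis, now without any brace: the side condition from the castle construction plus the local orientation at the unique ladder pins down coherence.

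The second part is to propagate coherence from consecutive pairs to all pairs involving $C_m$. By the inductive hypothesis and the first part, each consecutive pair $(C_i,C_{i+1})$ for $0\leqslant i\leqslant m-1$ is coherent. Combined with the placement of $C_m$ outside $C_{m-1}$ on the side opposite $C_{m-2}$, the circles $C_0,\ldots,C_m$ are nested on $\S$ so that the annulus bounded by $C_i$ and $C_m$ decomposes as a union of the consecutive annuli bounded by $(C_i,C_{i+1}),(C_{i+1},C_{i+2}),\ldots,(C_{m-1},C_m)$. Each consecutive pair is homologous in its own annulus, and the inclusions into the larger annulus send these classes to a common generator of $H_1$. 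It follows that $C_i$ and $C_m$ are homologous in the annulus bounded by them, which is precisely coherence.

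The principal obstacle is the first part: extracting a global orientation statement from the purely local, combinatorial no-traps hypothesis. No single ladder imposes a coherence constraint by itself, so coherence has to be forced indirectly through the interaction between the ladder ordering along $F_{m-1}$'s orientation and the castle-construction rule defining where $F_m$ begins and ends on $C_m$. Making this rigorous requires a careful case analysis of the possible ladder configurations and of how the ``immediately before the first crossing'' and ``immediately after the last crossing'' endpoints of $F_m$ sit once $C_m$'s orientation is fixed; essentially, the bipartite structure of the Seifert graph together with the no-trap hypothesis must be leveraged to ensure $F_m$ cannot wrap past $C_{m-1}$ onto the forbidden side.
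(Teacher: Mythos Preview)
The paper records this as an \emph{Observation} and offers no proof, so there is no argument to compare against directly. Your overall inductive scheme is sound, but you have misplaced the difficulty, and the key step is not actually argued.

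The ``first, and main, part''---that $C_{m-1}$ and $C_m$ are coherent---is automatic and does not require the no-traps hypothesis. Any two Seifert circles sharing a crossing are coherent: near the crossing the smoothed arcs are parallel and co-oriented, and the small region between them lies in the annulus bounded by the two circles, forcing the boundary components of that annulus to be homologous. Your contradiction argument (anti-parallel brace, ``long arc'', etc.) is therefore analysing a configuration that cannot occur; it proves nothing beyond this triviality. You even recognise this in the single-ladder case (``the local orientation at the unique ladder pins down coherence''), but the same one-line argument works regardless of how many ladders there are.

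The substantive claim is the \emph{nesting}: that $C_m$ lies on the side of $C_{m-1}$ opposite $C_{m-2}$. You assert this follows from the castle rule that $C_m$ contains no level-$(m-2)$ floor, but that rule only rules out $C_m=C_{m-2}$; it does not by itself control which side of $C_{m-1}$ the circle $C_m$ sits on. The actual argument uses the brace between $F_{m-2}$ and $F_{m-1}$ (not, as in your proposal, a brace between $F_{m-1}$ and $F_m$): by the construction of $F_{m-1}$, every ladder from $F_{m-1}$ to $C_m$ attaches to $F_{m-1}$ strictly between two consecutive $F_{m-2}$--$F_{m-1}$ ladders, and if such a ladder pointed towards the $C_{m-2}$-side it would place $F_m$ inside the corresponding brace, making that brace a trap. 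This is where the no-traps hypothesis does work. Once nesting is established, your Part~2 (decomposing the annulus $C_i$--$C_m$ into consecutive annuli and propagating homology) is correct and is the right way to finish.
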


\subsection{Proof of Proposition \ref{NoNested}}

Let $\D$ be a positive diagram without nested Seifert circles. Assume that $\D$ has no lone crossings. To prove that $\D$ can be obtained from a zero-crossing diagram by shackle moves and crossing doublings, we proceed by induction on ${\rm Cr}(\D)$. If $\D$ has precisely zero crossings, there is nothing to prove. Next, suppose that ${\rm Cr}(\D) > 0$. 

Without loss of generality, $\D$ has no trivial components, i.e.~those components that are Seifert circles. By Lemma \ref{TrappedCastle}, there exists an appropriate point $x$ on $\D$. 
Since $\D$ has no trivial components, the castle~${\rm Cas}(x;\D)$ has at least one ladder. Denote by~$F_0$ the floor of level zero and let~$F_1$ be an arbitrary floor of level one. 

Since $\D$ has no nested Seifert circles, there are no floors on top of $F_1$. Besides, since the castle~${\rm Cas}(x;\D)$ has no traps, the ladders joining $F_0$ and $F_1$ are contained in a disk, whose intersection with $\D$ is a braid with two strands, such as
\begin{tikzpicture}[rotate=90, scale=0.9, baseline=1]
\draw[thick] (0.3,0.3+0.6)--(0,0+0.6);
\draw[thick] (0.2,0.1+0.6)--(0.3,0+0.6);
\draw[thick] (0,0.3+0.6)--(0.1,0.2+0.6);
\draw[thick] (0.3,0.3+0.3)--(0,0+0.3);
\draw[thick] (0.2,0.1+0.3)--(0.3,0+0.3);
\draw[thick] (0,0.3+0.3)--(0.1,0.2+0.3);
\draw[->] (0.3,0.3)--(0,0);
\draw[thick] (0.3,0.3)--(0,0);
\draw[thick] (0.2,0.1)--(0.3,0);
\draw[->] (0.2,0.1)--(0.3,0);
\draw[thick] (0,0.3)--(0.1,0.2);
\end{tikzpicture}.
Since $\D$ has no lone crossings, there are at least two such ladders.

Let $\D^\prime$ be the diagram obtained from $\D$ by smoothing all the ladders joining $F_0$ and $F_1$. By the above observations, the diagram $\D$ can be obtained from $\D^\prime$ by shackle moves and crossing doublings.

Note that $\D^\prime$ is a positive diagram and has no nested Seifert circles or lone crossings. By the induction hypothesis, $\D^\prime$ can be obtained from a zero-crossing diagram by shackle moves and crossing doublings. Therefore, the diagram $\D$ can be obtained in the required way as well. The result follows.

\section{Elaboration on skein relation}

By definition, the HOMFLY-PT polynomial is a unique function that maps each oriented link diagram~$\D$ to a two-variable Laurent polynomial~$\P(\D) \in \mathbb{Z}[a^{\pm 1},z^{\pm 1}]$ such that:
\begin{enumerate}
\item[(i)] if two diagrams $\D$ and $\D^\prime$ represent the same link, then~$\P(\D) = \P(\D^\prime)$;
\item[(ii)] for any {\it skein triple} $(\D_+, \D_-, \D_0)$, i.e.~a triple of diagrams that coincide except within a small region where they differ as
(\begin{tikzpicture}[rotate=90, scale=0.9, baseline=1]
\draw[->] (0.3,0.3)--(0,0);
\draw[thick] (0.3,0.3)--(0,0);
\draw[thick] (0.2,0.1)--(0.3,0);
\draw[->] (0.2,0.1)--(0.3,0);
\draw[thick] (0,0.3)--(0.1,0.2);
\end{tikzpicture},
\begin{tikzpicture}[rotate=90, scale=0.9, baseline=1]
\draw[->] (0,0.3)--(0.3,0);
\draw[thick] (0,0.3)--(0.3,0);
\draw[thick] (0.1,0.1)--(0,0);
\draw[->] (0.1,0.1)--(0,0);
\draw[thick] (0.3,0.3)--(0.2,0.2);
\end{tikzpicture},
\begin{tikzpicture}[rotate=90, scale=0.9, baseline=1]
\draw[->] (0,0.3)--(0,0);
\draw[thick] (0,0.3)--(0,0);
\draw[->] (0.3,0.3)--(0.3,0);
\draw[thick] (0.3,0.3)--(0.3,0);
\end{tikzpicture}\hspace{-0.1cm}
),
the {\it skein relation} holds:
\begin{align*}
a^{-1} \P(\D_+) - a \P(\D_-) = z\P(\D_0);
\end{align*}
\item[(iii)] if $\D$ is a knot diagram 
with precisely zero crossings,
then~$\P(\D) = 1$.
\end{enumerate}

It is easy to show that if $\D$ has precisely zero crossings and $n$ link components, then
\begin{align}\label{HOMFLYPTProperties}
\P(\D) = (a^{-1}-a)^{n-1} z^{1 - n}.
\end{align}

\subsection{A formula for the HOMFLY-PT polynomial}

In practice, the following versions of the skein relation are more convenient:
\begin{align*}
\P(\D_+) &= a^{2} \P(\D_-) + a z\P(\D_0), \\
\P(\D_-) &= a^{-2} \P(\D_+) - a^{-1} z\P(\D_0).
\end{align*}

The process of calculating the HOMFLY-PT polynomial by repeated application of these relations, referred to as {\it resolution}, can be recorded schematically as shown in Fig. \ref{RTExamaple}.

\begin{figure}[h]
\centering
\includegraphics[width=10cm]{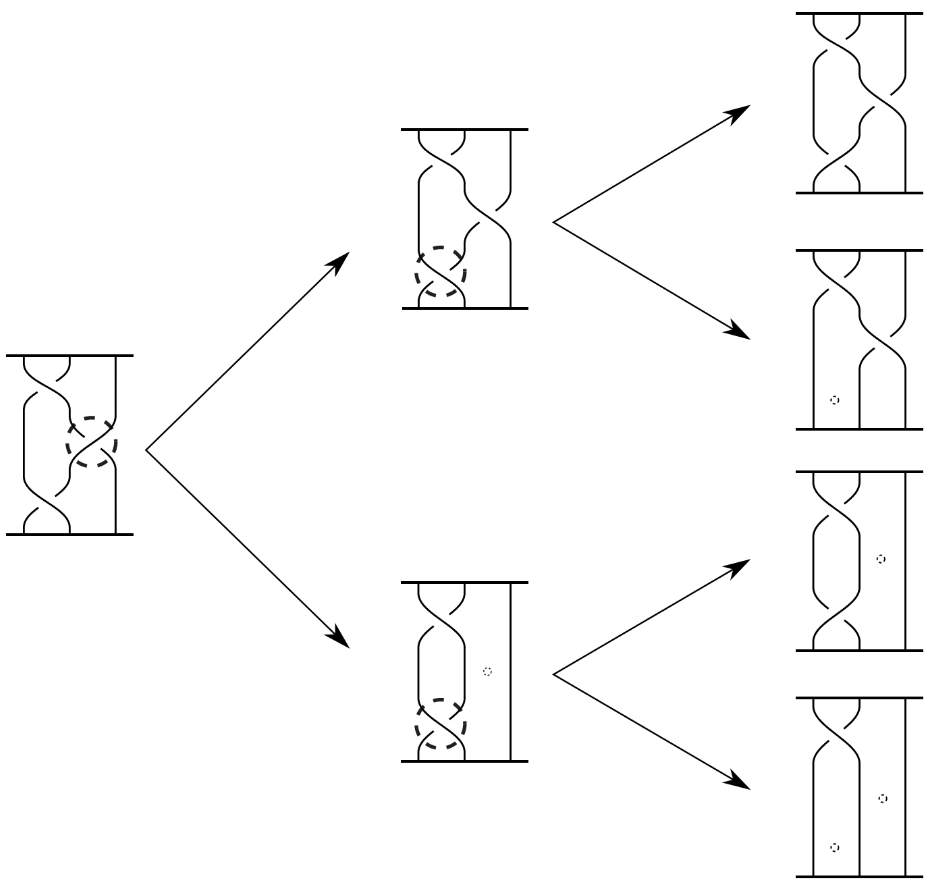}
\captionsource{A coherent resolution tree.}{Image credit: \cite{LDH19}.}
\label{RTExamaple}
\end{figure}

To be more precise, a {\it resolution tree} for $\D$ is a weighted rooted unitrivalent tree~$\T$ with a diagram associated with each node such that:
\begin{enumerate}
\item[(1)] the root node of $\T$ is $\D$;
\item[(2)] each triple (parent, left child, right child) is of the form $(\D_+, \D_0,\D_-)$ or $(\D_-,\D_0,\D_+)$;
\item[(3)] each branch of $\T$ is labelled with a monomial in $a$ and $z$ as shown in Fig. \ref{Resolving}.
\end{enumerate}

In other words, a resolution tree is a graph of the resolution branching process: at each internal node, one takes a crossing of the current diagram and branches by smoothing and flipping the crossing. For further purposes, the following additional condition is required:
\begin{enumerate}
\item[(4)] each crossing of $\D$ is resolved at most once.
\end{enumerate}

Naturally, any resolution tree $\T$ of $\D$ induces a decomposition of~$\P(\D)$ into a sum, indexed by leaf nodes of $\T$. Let us calculate the corresponding terms.

We refer to a one-to-one correspondence between leaf nodes of $\T$ and paths in $\T$ from the root to a leaf. Given a leaf node $\U$ of $\T$, the contribution of $\U$ to~$\P(\D)$ is $\P(\U)$ multiplied by the weights of the corresponding path.

As~Fig. \ref{Resolving} shows, the degree of $a$ in the weight of a branch equals the change of the writhe from the child to the parent. Besides, a $z$ term indicates that the child is obtained from the parent by a crossing smoothing, and a negative sign indicates that the smoothed crossing is negative. It follows that 
\begin{align}\label{HOMFLY-PT-Expansion}
\P(\D) = \sum\limits_{\U \in \T_\circ} (-1)^{t^\prime(\U)} z^{t(\U)} a^{\omega(\D) - \omega(\U)} \P(\U),
\end{align}
where $t(\U)$ is the number of crossings of $\D$ that one smoothed in obtaining~$\U$, $t^\prime(\U)$ is the number of negative crossings among the smoothed ones, and $\mathcal{T}_\circ$ is the set of leaf nodes of $\T$.

In particular, if the leaf nodes of $\T$ are unlinks, \eqref{HOMFLYPTProperties} allows calculating $\P(\D)$ explicitly.

\begin{figure}[h]
\centering
\includegraphics[width = 12.5cm]{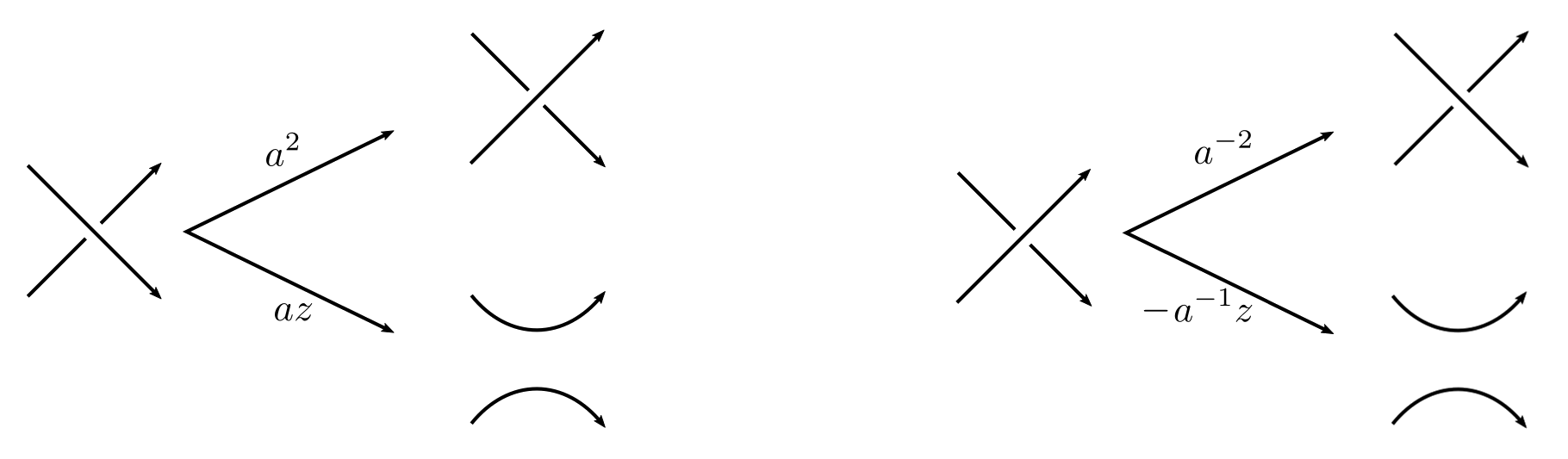}
\captionsource{The weight assignment for a resolution tree.}{Image credit: \cite{DHL19}.}
\label{Resolving}
\end{figure}

\subsection{Coherent resolution trees}

In this section, we introduce a class of resolution trees by adapting the construction of Y. Diao, G. Hetyei, and P. Liu \cite{DHL19}. We~call these trees {\it coherent} ones. At first, we need some definitions.

Let $p$ be an innermost point on $\D$. By the {\it maximal coherent path} starting at $p$ we mean the maximal descending path if the Seifert circle containing $p$ is loose on the left. Otherwise, if the Seifert circle is loose on the right, we mean the maximal ascending path. 

In other words, the precise meaning of such a path depends on the starting point. If the Seifert circle containing $p$ is loose on the left, we refer to the {\it descending rule}. Otherwise, we refer to the {\it ascending rule}.

\subsubsection{The construction}

Given a link diagram $\D$, let us describe all coherent resolution trees for $\D$ at once. The~construction consists of several phases. The resulting tree $\T$ depends on the choice of sequences of base points.

We begin at the first phase with the one node tree~$\T_0$. At~the end of phase $k$, we obtain a rooted subtree~$\T_k$ of~$\T$. The resulting subtrees satisfy $$\{\D\} = \T_0 \subseteq \T_1 \subseteq \ldots \subseteq \T_m = \T,$$ thus, each of the trees is obtained from the previous one by extensions shown in Fig.~\ref{Resolving}. 

The construction is inductive. Let $k \in \{1,2,\ldots,m\}$ be the index of another phase.

Let $\U_{k-1}$ be an arbitrary leaf node of $\T_{k-1}$. We suppose the diagram $\U_{k-1}$ contains $k-1$ base points~$x_1, x_2, \ldots, x_{k-1}$ lying on pairwise distinct link components of $\U_{k-1}$. We remove these components from consideration and choose an arbitrary appropriate\footnote{See Definition \ref{AppropriatePoints}.} point $x_k$ on the resulting diagram. We emphasize that this point depends on $\U_{k-1}$.

After the choice of the points $x_k$ is made for each leaf node $\U_{k-1}$, we do the following procedure. For each leaf node $\U_{k-1}$ of the current tree, we extend the tree at the crossing of~$\U_{k-1}$ which is the end of the maximal coherent path starting at $x_k$.

This procedure continues until, for each leaf node of the current tree (including the new ones), the maximal coherent path starting at $x_k$ is closed\footnote{The extensions terminate because the length of another maximal coherent path increases.}. After that, the $k$th phase ends.

The construction of the coherent tree $\T$ continues until, for each leaf node $\U$ of the current tree, one visits all link components of $\U$.

\begin{observation}\label{LeafsAreUnlinks}
For any leaf node $\U$ of $\T$, each link component of $\U$ is either descending or ascending. Besides, the link components of $\U$ are stacked over each other. Thus, $\U$ represents an unlink.
\end{observation}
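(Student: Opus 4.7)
The plan is to prove the observation by induction on the phase index $k$, tracking what each phase achieves on the subtree $\T_k$. The key is that the termination criterion of phase $k$ guarantees the desired coherence of the freshly processed component on every leaf of $\T_k$.

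First I would unpack what it means that phase $k$ has ended: on every leaf of the current subtree, the maximal coherent path starting at $x_k$ must be closed, i.e.\ it traverses the entire link component $L_k$ containing $x_k$ and returns to $x_k$ without meeting an obstruction. Here the obstruction is an ascending crossing when the descending rule is in force (the Seifert circle of $x_k$ is loose on the left) and a descending crossing otherwise. Consequently, on any such leaf, every crossing traversed along $L_k$ from $x_k$ is of the prescribed type, whence $L_k$ is purely descending or purely ascending from $x_k$, and therefore represents an unknot by the earlier remark on descending/ascending components. Since every resolution performed in phase $k$ occurs at a crossing lying on $L_k$, phase $k$ does not spoil the coherence of $L_1,\ldots,L_{k-1}$ secured in previous phases. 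Induction on $k$ then gives, on any leaf $\U$ of $\T = \T_m$, that each component is descending or ascending with respect to its chosen base point, proving the first assertion.

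For the stacking claim, I would examine an arbitrary crossing of $\U$ shared by two components $L_k$ and $L_{k'}$ with $k<k'$. The global travel visits $L_k$ before $L_{k'}$, so the first visit to this crossing is via $L_k$. If $L_k$ is descending, the first visit is over, so $L_k$ lies above $L_{k'}$ there; if $L_k$ is ascending, $L_k$ lies below. Checking the four combinations of rules for $(L_k, L_{k'})$ reveals that the mixed cases (``$L_k$ descending, $L_{k'}$ ascending'' and ``$L_k$ ascending, $L_{k'}$ descending'') are incompatible and hence cannot realise a shared crossing in $\U$, while the two uniform cases produce a consistent over/under relation at every shared crossing. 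These relations assemble into a consistent height ordering of the components of $\U$ in $3$-space, so $\U$ is an unlink.

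The main obstacle I anticipate is verifying that each phase actually terminates in finitely many tree extensions. One must check that each resolution at an obstructing crossing strictly lengthens the maximal coherent path in both children: the smoothing child eliminates the obstruction outright, while the flipping child converts an ascending crossing into a descending one (or vice versa), allowing the path to continue past it on the next iteration. A monotonicity argument on the remaining portion of $L_k$ still to be traversed then closes the induction.
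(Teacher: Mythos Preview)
The paper records this as an observation and gives no proof, so there is nothing to compare your argument against directly; what follows is an assessment of the proposal on its own terms.

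Your induction for the first assertion is correct once ``descending/ascending'' is read as a statement about self-crossings: phase $k$ forces every self-crossing of the component through $x_k$ to the prescribed type, and later phases resolve only crossings lying on later components, hence never a self-crossing of $L_k$. This shows each component is unknotted. Note, though, that your sentence ``phase $k$ does not spoil the coherence of $L_1,\dots,L_{k-1}$'' is true only in this restricted sense --- phase $k$ \emph{can} flip a crossing shared by $L_k$ and an earlier $L_j$, so the broad reading fails.

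The stacking argument has a genuine gap. You claim the mixed-rule cases ``are incompatible and hence cannot realise a shared crossing in $\U$,'' but no reason is given, and the assertion is not evidently true: nothing in the construction prevents two components governed by different rules from meeting. What one should argue instead is that for $k<k'$ the over/under relation at \emph{every} shared crossing of $L_k$ and $L_{k'}$ in $\U$ is determined by the rule attached to the \emph{later} component $L_{k'}$, since phase $k'$ is the last phase in which such a crossing is visited and at its termination the crossing must be a non-obstruction for $L_{k'}$. Thus if $L_{k'}$ follows the descending rule every earlier component lies over it, and if $L_{k'}$ follows the ascending rule every earlier component lies under it. Each newly processed component therefore sits entirely above or entirely below all previously processed ones, and these pairwise relations assemble into a consistent total height order regardless of how the rules are mixed. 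Your four-case analysis is unnecessary, and its unsupported incompatibility claim should be replaced by this direct argument.
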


By varying the appropriate base points on the leaf nodes of the trees $\T_0, \T_1, \ldots, \T_{m-1}$, we may obtain quite a few coherent resolution trees for $\D$.

\subsubsection{Properties of leafs}

Let $\T$ be an arbitrary coherent resolution tree for~$\D$. By Observation \ref{LeafsAreUnlinks}, each leaf $\U$ of $\T$ represents an unlink. Therefore, by \eqref{HOMFLY-PT-Expansion}, the term of $\P(\D)$ corresponding to $\U$ is
\begin{align}\label{HOMFLY-PT-Expansion1}
(-1)^{t^\prime(\U)} z^{t(\U)} a^{\omega(\D) - \omega(\U)} (a-a^{-1})^{\#\U - 1} z^{1 - \#\U}.
\end{align}

\begin{definition}
We say that a leaf $\U$ of $\T$ {\it contributes to the highest~$a$-degree term} if $$\omega(\D) - \omega(\U) + \#\U - 1 = \omega(\D) + s(\D)-1.$$
\end{definition}

The difference between our version of coherent resolution trees and the one of \cite{DHL19} is that our definitions of appropriate points differ. However, this is not essential for our purposes. In particular, the following result, which is Corollary 5.3 in \cite{DHL19}, is still applicable.

\begin{proposition}\label{a-HighestDegreeCriterion}
Let $\U$ be a leaf of $\T$. The following conditions are equivalent:
\begin{enumerate}
\item[(i)] $\U$ contributes to the highest~$a$-degree term;
\item[(ii)] the projection onto $\S$ of any link component of $\U$ is a simple closed curve.
\end{enumerate}
\end{proposition}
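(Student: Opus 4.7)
The plan is to unpack the contribution formula \eqref{HOMFLY-PT-Expansion1} and compare its top $a$-degree with the Morton bound $\omega(\D)+s(\D)-1$. Since $(a-a^{-1})^{\#\U-1}$ has top $a$-exponent $\#\U-1$, the contribution of $\U$ has top $a$-degree exactly $\omega(\D)-\omega(\U)+\#\U-1$, so condition (i) is equivalent to $\#\U-\omega(\U)=s(\D)$. Both the oriented skein smoothing and the crossing flip preserve the local orientation data at the crossing and hence the Seifert decomposition, so $s(\U)=s(\D)$ throughout $\T$, and (i) simplifies further to the intrinsic equality $\#\U-\omega(\U)=s(\U)$.

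For (ii) $\Rightarrow$ (i): if every link component $C_i$ of $\U$ projects to a simple closed curve, then each component is its own Seifert circle, giving $s(\U)=\#\U$, and every crossing of $\U$ is a crossing between two distinct components. Since $\U$ is an unlink by Observation~\ref{LeafsAreUnlinks}, the pairwise linking numbers all vanish, hence $\omega(\U)=2\sum_{i<j}\mathrm{lk}(C_i,C_j)=0$, and $\#\U-\omega(\U)=\#\U=s(\U)$ as required.

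For (i) $\Rightarrow$ (ii), decompose $\omega(\U)=\sum_i\omega_i^{\mathrm{self}}$ using the unlink property, and $s(\U)=\sum_i\sigma(C_i)$, where $\sigma(C_i)$ and $\omega_i^{\mathrm{self}}$ record the Seifert circles and the self-writhe of the component $C_i$. By Observation~\ref{LeafsAreUnlinks}, each $C_i$ is an unknot, so the inequalities~\eqref{L} and~\eqref{R} applied to $C_i$ together yield $\sigma(C_i)\geq|\omega_i^{\mathrm{self}}|+1\geq 1-\omega_i^{\mathrm{self}}$. Hence every summand of $\sum_i\bigl(1-\omega_i^{\mathrm{self}}-\sigma(C_i)\bigr)=0$ is nonpositive, forcing equality term by term and yielding $\sigma(C_i)=1-\omega_i^{\mathrm{self}}$ with $\omega_i^{\mathrm{self}}\leq 0$ for every $i$.

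The main obstacle is to upgrade these Bennequin-type equalities to the conclusion that each component is genuinely a simple closed curve. The plan is to follow the castle-based argument of~\cite[Corollary~5.3]{DHL19}: any residual self-crossing of $C_i$ is traced through the coherent construction and shown to have forced an earlier branching at an ascending or descending crossing, contradicting its presence in the leaf $\U$. The appropriateness of the base points $x_k$ (Definition~\ref{AppropriatePoints}), guaranteed by Lemma~\ref{TrappedCastle}, is precisely what enables this tracking, and the remark preceding the proposition signals that the modified notion of appropriate point used here does not affect the validity of the DHL19 argument.
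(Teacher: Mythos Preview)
The paper does not supply its own proof of this proposition: it is imported verbatim as Corollary~5.3 of \cite{DHL19}, with only the remark that the modified notion of appropriate point does not affect its validity. So there is no in-paper argument to compare against beyond that citation, and your final paragraph --- deferring to the castle argument of \cite{DHL19} --- is exactly how the paper handles the matter.

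Your direction (ii)$\Rightarrow$(i) is clean and correct, and goes beyond what the paper writes out. For (i)$\Rightarrow$(ii), however, the intermediate algebra contains a genuine gap and does no real work. The additive splitting $s(\U)=\sum_i\sigma(C_i)$ is asserted without justification; it is not obvious once there are crossings between distinct components, because the Seifert smoothing at such a crossing reconnects arcs of \emph{different} components rather than letting each component pass straight through, so the Seifert circles of $\U$ and the Seifert circles of the individual $C_i$ are genuinely different collections of curves. Even if that identity holds, the Bennequin-type equalities $\sigma(C_i)=1-\omega_i^{\mathrm{self}}$ with $\omega_i^{\mathrm{self}}\le 0$ do \emph{not} force $C_i$ to be embedded: a single negative curl already satisfies $\sigma=2=1-(-1)$. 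You recognise this yourself (``the main obstacle is to upgrade these Bennequin-type equalities\ldots'') and then hand the problem back to \cite{DHL19}. So the Bennequin detour neither advances the argument nor shortcuts the cited proof; the substantive content of (i)$\Rightarrow$(ii) remains entirely in \cite{DHL19}, just as in the paper.
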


\section{Proof of Theorem \ref{MainTheorem}}

In this section, we address the bound \eqref{R} for positive diagrams. To begin with, we describe the behaviour of the sharpness under specific crossing smoothing.

\begin{lemma}\label{SkeinSharpnessLemma}
Let $(\U_+,\V,\U_0)$ be a triple of diagrams that coincide except within a small region where they differ as
(\begin{tikzpicture}[rotate=90, scale=0.9, baseline=1]
\draw[thick] (0.3,0.3+0.3)--(0,0+0.3);
\draw[thick] (0.2,0.1+0.3)--(0.3,0+0.3);
\draw[thick] (0,0.3+0.3)--(0.1,0.2+0.3);
\draw[->] (0.3,0.3)--(0,0);
\draw[thick] (0.3,0.3)--(0,0);
\draw[thick] (0.2,0.1)--(0.3,0);
\draw[->] (0.2,0.1)--(0.3,0);
\draw[thick] (0,0.3)--(0.1,0.2);
\end{tikzpicture},
\begin{tikzpicture}[rotate=90, scale=0.9, baseline=1]
\draw[->] (0,0.3)--(0,0);
\draw[thick] (0,0.3)--(0,0);
\draw[->] (0.3,0.3)--(0.3,0);
\draw[thick] (0.3,0.3)--(0.3,0);
\end{tikzpicture}, \hspace{-0.18cm}
\begin{tikzpicture}[rotate=90, scale=0.9, baseline=1]
\draw[->] (0.3,0.3)--(0,0);
\draw[thick] (0.3,0.3)--(0,0);
\draw[thick] (0.2,0.1)--(0.3,0);
\draw[->] (0.2,0.1)--(0.3,0);
\draw[thick] (0,0.3)--(0.1,0.2);
\end{tikzpicture}).
\begin{enumerate}
\item[(i)] If the bound \eqref{R} is sharp for $\U_+$, then it is sharp for at least one of $\V$ and $\U_0$.
\item[(ii)] If $\U_+$ is positive and the bound \eqref{R} is sharp for at least one of $\V$ and $\U_0$, then it is sharp for $\U_+$ as well.
\end{enumerate}
\end{lemma}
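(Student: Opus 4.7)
The key identity, coming from the HOMFLY-PT skein relation applied at the upper of the two stacked positive crossings in $\U_+$, is
\[
\P(\U_+) = a^{2}\,\P(\V) + a z\,\P(\U_0),
\]
because the flipped diagram at that crossing contains a Reidemeister II pair that reduces to $\V$, while the smoothed diagram is $\U_0$. Reading off the local picture gives $s(\U_+) = s(\V) = s(\U_0) =: s$, $\omega(\V) = \omega(\U_+) - 2$, and $\omega(\U_0) = \omega(\U_+) - 1$; and positivity of $\U_+$ is inherited by $\V$ and $\U_0$. Setting $M := \omega(\U_+) + s - 1$ and extracting the coefficient of $a^{M}$ yields
\[
[a^{M}]\P(\U_+) = [a^{M-2}]\P(\V) + z\,[a^{M-1}]\P(\U_0),
\]
the two summands being the top-$a$ coefficients of $\V$ and $\U_0$ at their own \eqref{R} bounds $M-2$ and $M-1$.

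Part (i) is then immediate by contraposition: if both $\V$ and $\U_0$ failed \eqref{R}, both summands on the right would vanish, forcing $[a^{M}]\P(\U_+) = 0$ and contradicting sharpness for $\U_+$.

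For part (ii), put $f(z) := [a^{M-2}]\P(\V)$ and $g(z) := [a^{M-1}]\P(\U_0)$. When exactly one of $f,g$ vanishes, the non-vanishing of $f + z g$ is immediate. The nontrivial case is both $f,g$ non-zero, where I must exclude $f + z g = 0$. My plan is to invoke the following sign-positivity statement for positive diagrams (observable, for instance, on the Hopf link, the torus knots $T(2,k)$, and $T(3,5)$): \emph{for any positive diagram $\D$, every non-zero $z$-coefficient of $[a^{\omega(\D)+s(\D)-1}]\P(\D)$ shares the common sign $(-1)^{s(\D)-1}$}. Since $\V$ and $\U_0$ have the same $s$, both $f$ and $g$, and hence $z g$, carry their non-zero coefficients with one uniform sign $(-1)^{s-1}$; the sum $f + z g$ then inherits this sign and vanishes only when $f \equiv g \equiv 0$, contradicting the case assumption.

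The main obstacle is the sign claim itself, which is precisely where the positivity hypothesis in (ii) enters. I would attack it through the coherent-resolution-tree framework of Section~3: by Proposition~\ref{a-HighestDegreeCriterion}, the leaves $\U$ contributing to the maximal $a$-degree are those whose link components all project to simple closed curves; and formula~\eqref{HOMFLY-PT-Expansion}, with $t'(\U) = 0$ for positive $\D$, assigns each such leaf the monomial $(-1)^{\#\U - 1}\,z^{t(\U) - \#\U + 1}\,a^{\omega(\D) - \omega(\U)}$ after expanding the top $a$-term of $(a^{-1}-a)^{\#\U - 1}$. The technical heart is to group these contributions by $z$-exponent and verify that the residual sign is always $(-1)^{s(\D)-1}$, so that the signed sum does not collapse to zero; this is the step I expect to demand the most delicate combinatorial analysis.
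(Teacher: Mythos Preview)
Your argument follows the paper's proof essentially line for line: the same skein identity $\P(\U_+)=a^{2}\P(\V)+az\P(\U_0)$, the same bookkeeping of $\omega$ and $s$, the same contrapositive for (i), and for (ii) the same appeal to coherent resolution trees to show that the top-$a$ coefficients of $\V$ and $\U_0$ have a common sign so that $f+zg$ cannot cancel. The paper phrases the sign claim as ``$\Q_1(z)$ and $\Q_2(z)$ have non-negative coefficients'' and deduces it in one stroke from $t'(\W)=0$ via formula~\eqref{HOMFLY-PT-Expansion1}; your formulation with the global sign $(-1)^{s(\D)-1}$ is the more careful one (and indeed matches the Hopf link, where the top coefficient is $-z^{-1}$), but the logical content is identical.

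The only gap is that you stop short of verifying your sign claim, anticipating ``delicate combinatorial analysis''. No such analysis is needed. For a leaf $\U$ contributing to the highest $a$-degree one has $\#\U-\omega(\U)=s(\D)$, so $(-1)^{\#\U-1}=(-1)^{s(\D)-1}(-1)^{\omega(\U)}$, and it remains only to see that $\omega(\U)$ is even. By Proposition~\ref{a-HighestDegreeCriterion} each link component of $\U$ projects to a \emph{simple} closed curve on~$\S$; hence every crossing of $\U$ lies between two distinct such curves, and any two transverse closed curves on the sphere meet in an even number of points. Summing over pairs gives ${\rm Cr}(\U)$ even, whence $\omega(\U)\equiv{\rm Cr}(\U)\equiv 0\pmod 2$. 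This finishes the verification and closes the proof.
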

\begin{proof}
Denote by $\U_-$ the diagram obtained from $\U_+$ by replacing the region 
\begin{tikzpicture}[rotate=90, scale=0.9, baseline=1]
\draw[thick] (0.3,0.3+0.3)--(0,0+0.3);
\draw[thick] (0.2,0.1+0.3)--(0.3,0+0.3);
\draw[thick] (0,0.3+0.3)--(0.1,0.2+0.3);
\draw[->] (0.3,0.3)--(0,0);
\draw[thick] (0.3,0.3)--(0,0);
\draw[thick] (0.2,0.1)--(0.3,0);
\draw[->] (0.2,0.1)--(0.3,0);
\draw[thick] (0,0.3)--(0.1,0.2);
\end{tikzpicture}
with
\begin{tikzpicture}[rotate=90, scale=0.9, baseline=1]
\draw[->] (0.1,0.1)--(0,0);
\draw[thick] (0.3,0.3+0.3)--(0,0+0.3);
\draw[thick] (0.2,0.1+0.3)--(0.3,0+0.3);
\draw[->] (0.2,0.1)--(0.3,0);
\draw[thick] (0,0.3+0.3)--(0.1,0.2+0.3);
\draw[thick] (0,0.3)--(0.3,0);
\draw[thick] (0.1,0.1)--(0,0);
\draw[thick] (0.3,0.3)--(0.2,0.2);
\end{tikzpicture}. 
Since $\U_-$ and $\V$ represent the same link, one has $\P(\U_-) = \P(\V)$. 

By the skein relation,
\begin{align*}
\P(\U_+) = a^{2}\P(\U_-) + a z\P(\U_0) = a^{2}\P(\V) + a z\P(\U_0).
\end{align*}
Note that $\omega(\U_+) = \omega(\U_0) + 1 = \omega(\U_-) + 2$ and $s(\U_+) = s(\U_-) = s(\U_0)$. In particular, $$\omega(\U_+) + s(\U_+) - 1 = (\omega(\V) + s(\V) - 1) + 2 = (\omega(\U_0) + s(\U_0) - 1) + 1.$$ Therefore, the coefficient of the monomial $a^{\omega(\U_+) + s(\U_+) - 1}$ in $\P(\U_+)$ is $\Q_1(z) + \Q_2(z)$, where $\Q_1(z)$ and~$\Q_2(z)$ are, respectively, the coefficients of the monomials~$a^{\omega(\V) + s(\V) - 1}$ and~$a^{\omega(\U_0) + s(\U_0) - 1}$ in $\P(\V)$ and $\P(\U_0)$.

Note that the bound \eqref{R} is sharp for $\U_+$ if and only if $\Q_1(z) + \Q_2(z)$ is non-vanishing. Thus, if the bound is sharp, at least one of $\Q_1(z)$ and $\Q_2(z)$ is non-vanishing. Hence assertion~{\rm(i)} follows.

Next, suppose that $\U_+$ is positive. Then $\V$ and $\U_0$ are positive as well. We claim that the polynomials $\Q_1(z)$ and~$\Q_2(z)$ have non-negative coefficients. To prove this, let~$\T_1$ and~$\T_2$ be coherent resolution trees for, respectively, $\V$ and $\U_0$. Recall that by \eqref{HOMFLY-PT-Expansion1}, the term of~$\P$ corresponding to a leaf node $\W$ of a coherent resolution tree for a diagram $\D$ is
\begin{align*}
(-1)^{t^\prime(\W)} z^{t(\W)} a^{\omega(\D) - \omega(\W)} (a-a^{-1})^{\#\W - 1} z^{1 - \#\W},
\end{align*}
where $t(\W)$ is the number of crossings of $\D$ that one smoothed in obtaining~$\W$ and $t^\prime(\W)$ is the number of negative crossings among the smoothed ones. Note that if $\D$ is positive, then $t^\prime(\W) = 0$ for any leaf node $\W$. In particular, each monomial contributing to the highest~$a$-degree term has the same sign. Therefore, the claim follows.

Once again, the bound \eqref{R} is sharp for at least one of $\V$ and $\U_0$ if and only if at least one of the polynomials $\Q_1(z)$ and~$\Q_2(z)$ is non-vanishing. Thus, 
since these polynomials have non-negative coefficients, if at least one of $\V$ and $\U_0$ realize the bound, the sum $\Q_1(z) + \Q_2(z)$ is non-vanishing. Hence assertion {\rm(ii)} follows.
\end{proof}

Now, assume that the positive diagram $\D$ can be obtained from a zero-crossing diagram by shackle moves, crossing doublings, and Artin moves. To prove that the bound \eqref{R} is sharp for $\D$, we proceed by induction on its crossing number. If $\D$ has precisely zero crossings, the result follows from \eqref{HOMFLYPTProperties}. The inductive step follows from assertion {\rm (ii)} of Lemma \ref{SkeinSharpnessLemma}. Therefore, the result follows.

Finally, we deduce the opposite part of Theorem \ref{MainTheorem}. Assume that the bound \eqref{R} is sharp for $\D$. We aim to show that the diagram $\D$ can be obtained from a zero-crossing diagram by the required transformations. Again, we proceed by induction on the number of crossings of~$\D$. If it has precisely zero crossings, there is nothing to prove. Next, suppose that ${\rm Cr}(\D) > 0$.

First, let us state the following result.

\begin{lemma}\label{SharpNegativeHaveDoubleCrossing}
If the bound \eqref{R} is sharp for a positive diagram $\U$, then either~${\rm Cr}(\U) = 0$ or by using Artin moves, one can transform $\U$ into a diagram having a region of the form 
\begin{tikzpicture}[rotate=90, scale=0.9, baseline=1]
\draw[thick] (0.3,0.3+0.3)--(0,0+0.3);
\draw[thick] (0.2,0.1+0.3)--(0.3,0+0.3);
\draw[thick] (0,0.3+0.3)--(0.1,0.2+0.3);
\draw[->] (0.3,0.3)--(0,0);
\draw[thick] (0.3,0.3)--(0,0);
\draw[thick] (0.2,0.1)--(0.3,0);
\draw[->] (0.2,0.1)--(0.3,0);
\draw[thick] (0,0.3)--(0.1,0.2);
\end{tikzpicture}.
\end{lemma}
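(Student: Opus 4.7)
My approach would rely on the coherent resolution tree machinery of Section~3. First, I would take any coherent resolution tree $\T$ for $\U$. Since $\U$ is positive, the sign-consistency observation used in the proof of Lemma~\ref{SkeinSharpnessLemma}(ii) implies that every leaf of $\T$ contributing to the coefficient of $a^{\omega(\U)+s(\U)-1}$ in $\P(\U)$ does so with the same sign. Hence sharpness of \eqref{R} guarantees at least one leaf $\W$ of $\T$ whose monomial in \eqref{HOMFLY-PT-Expansion1} attains this top $a$-degree, and Proposition~\ref{a-HighestDegreeCriterion} then ensures that every link component of $\W$ projects onto a simple closed curve in $\mathcal{S}^2$.

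Next, I would examine the first phase of the construction of $\T$. By Lemma~\ref{TrappedCastle} there is an appropriate innermost point $x$ on $\U$, and the associated castle $\mathrm{Cas}(x;\U)$ is trap-free. Because ${\rm Cr}(\U) > 0$, and after possibly discarding isolated trivial Seifert circles, the floor $F_0$ of level $0$ shares at least one ladder with some floor $F_1$ of level $1$. The maximal coherent path from $x$ terminates at one of these ladders, which the first branching of $\T$ must either smooth or flip. The core of the argument is the following claim: tracking $(\omega, \#)$ along the resolution path from $\U$ to $\W$, and using the constraint $\#\W - \omega(\W) = s(\U)$ imposed by maximality of the $a$-degree together with condition~(4) of a resolution tree (a flipped positive crossing remains a negative crossing in every descendant and cannot later be removed), one shows that the component through $x$ in $\W$ cannot become simply projected unless some floor adjacent to $F_0$ shares at least two ladders $c, c'$ with $F_0$ that are parallel in the sense of the braid-like local picture granted by the absence of traps.

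Given such a pair $(c, c')$, the last step is to realize the configuration of two stacked positive crossings. Since $\mathrm{Cas}(x;\U)$ has no traps and $\U$ is positive, any crossings of $\U$ lying along the segments of $F_0$ and $F_1$ connecting $c$ and $c'$ belong to higher-level floors; these crossings therefore commute, via the braid relation $\sigma_i\sigma_{i+1}\sigma_i = \sigma_{i+1}\sigma_i\sigma_{i+1}$, past the two strands carrying $c$ and $c'$. A finite sequence of Artin moves slides all intervening crossings off the intermediate segments, bringing $c$ and $c'$ into adjacency and producing the required region.

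The main obstacle, as I see it, is the bookkeeping claim in the second paragraph: translating the global sharpness information for $\P(\U)$ into the local statement that some pair of adjacent floors of $\mathrm{Cas}(x;\U)$ must be joined by at least two parallel ladders. The leaf $\W$ is characterized only globally (it is a leaf of an a priori large tree and depends on many choices of appropriate points), whereas the desired conclusion is entirely local to the very first phase and must be extracted essentially from the exponent equation $\#\W - \omega(\W) = s(\U)$ combined with the coherent-path geometry of the castle; one needs to rule out the possibility that every ladder incident to $F_0$ is lone and lies on its own floor of level $1$.
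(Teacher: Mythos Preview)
Your opening moves---build a coherent resolution tree, pick a leaf $\W$ contributing to the top $a$-degree, and use Proposition~\ref{a-HighestDegreeCriterion} to force simple closed projections---coincide with the paper's Step~3. But from there the paper proceeds very differently, and the difference is not cosmetic. Before building any tree, the paper first applies \emph{all admissible Artin moves in one fixed direction} (Step~2), pushing crossings toward the base circle $R$; a potential function on pairs of adjacent Seifert circles is used to show this process terminates. Only on the resulting diagram $\D'$ does one build the castle and the tree. The component $\U_x$ through $x$ in the chosen leaf climbs to some \emph{maximal} level $m$; simplicity of its projection forces it back down, producing a brace between levels $m$ and $m-1$ that is ``loose from above''. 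The heart of the proof is then a descent claim: any loose-from-above brace at level $k$ is either already the double crossing, or $k\geqslant 1$ and one finds a loose-from-above brace at level $k-1$. The Artin-saturation from Step~2 is precisely what makes the descent go through: if the segment $G'$ had only a single ladder $t$ down to the floor $H$ below, the pattern $s_1\,t\,s_2$ would be an unreduced Artin triple, contradicting saturation. So the Artin moves are applied \emph{globally and in advance}, and are used as an obstruction, not as a post-hoc clean-up.

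Your third paragraph contains an actual error rather than a gap. Suppose you do find ladders $c,c'$ between $F_0$ and $F_1$ forming a brace; absence of traps forces the intervening crossings on $F_1'$ to go up to level-2 floors, so locally the picture is $\sigma_1\,\sigma_2^{k}\,\sigma_1$. The Artin relation does \emph{not} slide a $\sigma_2$ past a $\sigma_1$: applying it to $\sigma_1\sigma_2\sigma_1$ gives $\sigma_2\sigma_1\sigma_2$, which has no $\sigma_1^2$, and more generally no sequence of Artin moves on $\sigma_1\sigma_2^{k}\sigma_1$ with $k\geqslant 1$ produces a word containing $\sigma_1^2$. So even granting your paragraph-2 bookkeeping claim, the finishing step fails. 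The paper's reordering turns this obstruction into the engine of the proof: after saturation, a lone downward ladder inside a loose brace is impossible, so either the brace is already the desired region or you drop a level and repeat.
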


We prove this result in a separate section. At this moment, let us complete the proof of Theorem~\ref{MainTheorem}.

The above result implies that by using Artin moves, one can transform $\D$ into a diagram having a region of the form 
\begin{tikzpicture}[rotate=90, scale=0.9, baseline=1]
\draw[thick] (0.3,0.3+0.3)--(0,0+0.3);
\draw[thick] (0.2,0.1+0.3)--(0.3,0+0.3);
\draw[thick] (0,0.3+0.3)--(0.1,0.2+0.3);
\draw[->] (0.3,0.3)--(0,0);
\draw[thick] (0.3,0.3)--(0,0);
\draw[thick] (0.2,0.1)--(0.3,0);
\draw[->] (0.2,0.1)--(0.3,0);
\draw[thick] (0,0.3)--(0.1,0.2);
\end{tikzpicture}. 
Let $\D_0$ and $\V$ be the diagrams obtained from $\D$ by replacing this region with, respectively, 
\begin{tikzpicture}[rotate=90, scale=0.9, baseline=1]
\draw[->] (0.3,0.3)--(0,0);
\draw[thick] (0.3,0.3)--(0,0);
\draw[thick] (0.2,0.1)--(0.3,0);
\draw[->] (0.2,0.1)--(0.3,0);
\draw[thick] (0,0.3)--(0.1,0.2);
\end{tikzpicture}
and
\begin{tikzpicture}[rotate=90, scale=0.9, baseline=1]
\draw[->] (0,0.3)--(0,0);
\draw[thick] (0,0.3)--(0,0);
\draw[->] (0.3,0.3)--(0.3,0);
\draw[thick] (0.3,0.3)--(0.3,0);
\end{tikzpicture}. 
By assertion {\rm(i)} of Lemma \ref{SkeinSharpnessLemma}, the bound \eqref{R} is sharp for $\D_0$ or $\V$. Therefore, we can apply the induction hypothesis and conclude that $\D_0$ or $\V$ can be obtained from a zero-crossing diagram by shackle moves, crossing doublings, and Artin moves. Note that $\D$ is obtained 
from~$\D_0$ via a shackle move and 
from~$\V$ via a crossing doubling. 
Therefore, in any case, the diagram $\D$ can be obtained in the required way.

Therefore, Theorem \ref{MainTheorem} is proved.

\subsection{Proof of Lemma \ref{SharpNegativeHaveDoubleCrossing}}

Let $\D$ be a positive diagram. If ${\rm Cr}(\D) = 0$, there is nothing to prove. Suppose ${\rm Cr}(\D)>0$.

\begin{spacing}{1.5}
\end{spacing}

{\bf Step 1}. We remove all trivial components of $\D$, i.e.~those components of $\D$ that are Seifert circles. It is easy to check that such removal preserves the sharpness of the bound~\eqref{R}. Therefore, we can assume that $\D$ has no trivial components.

\begin{spacing}{1.5}
\end{spacing}

{\bf Step 2}. At this point, we are going to apply several Artin moves to $\D$. Let $x$ be an arbitrary appropriate\footnote{See Definition \ref{AppropriatePoints}.} point on $\D$, and denote by $R$ the Seifert circle containing $x$.

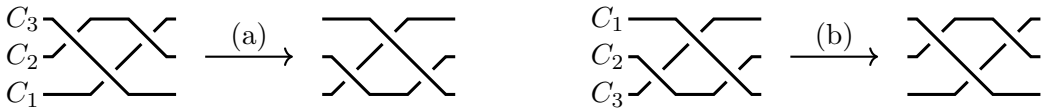
\begin{figure}[H]
\centering
\begin{tikzpicture}[rotate=90,scale=0.385, every node/.style={scale=0.5}]
\node[scale=2] at (0,0.7) {$C_1$};
\node[scale=2] at (1.3,0.7) {$C_2$};
\node[scale=2] at (2.6,0.7) {$C_3$};
\pic[
  rotate=90,
  line width=1.15pt,
  braid/control factor=0,
  braid/nudge factor=0,
  braid/gap=0.11,
  braid/number of strands = 3,
  name prefix=braid,
] at (0,0) {braid={
s_2^{-1}s_1^{-1}s_2^{-1}
}};
\node[scale=2] at (2,-0.64-3.9-1-1.5) {(a)};
\draw[thick, -To] (1.3,-0.64-3.9-1-0) -- (1.3,-0.64-3.9-2-2);
\pic[
  rotate=90,
  line width=1.15pt,
  braid/control factor=0,
  braid/nudge factor=0,
  braid/gap=0.11,
  braid/number of strands = 3,
  name prefix=braid,
] at (0,-0.64-3.9-3-2) {braid={
s_1^{-1}s_2^{-1}s_1^{-1}
}};
\node[scale=2] at (0,0.7-20) {$C_3$};
\node[scale=2] at (1.3,0.7-20) {$C_2$};
\node[scale=2] at (2.6,0.7-20) {$C_1$};
\pic[
  rotate=90,
  line width=1.15pt,
  braid/control factor=0,
  braid/nudge factor=0,
  braid/gap=0.11,
  braid/number of strands = 3,
  name prefix=braid,
] at (0,-20) {braid={
s_1^{-1}s_2^{-1}s_1^{-1}
}};
\node[scale=2] at (2,-0.64-3.9-1-1.5-20) {(b)};
\draw[thick, -To] (1.3,-0.64-3.9-1-0-20) -- (1.3,-0.64-3.9-2-2-20);
\pic[
  rotate=90,
  line width=1.15pt,
  braid/control factor=0,
  braid/nudge factor=0,
  braid/gap=0.11,
  braid/number of strands = 3,
  name prefix=braid,
] at (0,-0.64-3.9-3-2-20) {braid={
s_2^{-1}s_1^{-1}s_2^{-1}
}};
\end{tikzpicture}
\caption{Two directions of Artin moves.}
\label{NegativeSlide}
\end{figure}

Let $\M$ be the set of coherent\footnote{See Definition \ref{CoherentSequences}.} sequences of Seifert circles starting at $R$ and denote by~$[\M]$ the set of Seifert circles covered by such sequences. We assign a number~$\alpha(C)$ to each element~$C$ of~$[\M]$, namely, the distance to $R$ in the Seifert graph $\Gamma(\D)$. It is easy to check that for any triple $(C_1,C_2,C_3)$ of consecutive Seifert circles of a sequence from $\M$, the numbers assigned to them are consecutive as well.

At first, assume that the Seifert circle $R$ is loose on the right. In this case, for each triple~$(C_1,C_2,C_3)$ of consecutive Seifert circles from $[\M]$, we apply all possible Artin moves of type~(a) as shown in Fig.~\ref{NegativeSlide}.

\begin{claim}
The above process terminates.
\end{claim}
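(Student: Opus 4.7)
The plan is to exhibit a non-negative integer monovariant that strictly decreases under every Artin move of type~(a); termination then follows immediately, since a strictly decreasing sequence of non-negative integers must be finite.

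For each crossing $s$ of the current diagram joining Seifert circles $C_s$ and $C_s'$, set
$$\Phi(\D) \;:=\; \sum_s \bigl(\alpha(C_s) + \alpha(C_s')\bigr),$$
where the sum is restricted to crossings with both endpoints in $[\M]$; the contribution of other crossings may be set to zero, as they are unaffected by the process. Three facts need to be verified.

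First, an Artin move of type~(a) is the local braid relation $\sigma_2^{-1}\sigma_1^{-1}\sigma_2^{-1}\leftrightarrow\sigma_1^{-1}\sigma_2^{-1}\sigma_1^{-1}$ performed inside a small disk containing three strands belonging to $C_1, C_2, C_3$. It replaces the local triple of crossings $(C_2C_3,\ C_1C_2,\ C_2C_3)$ by $(C_1C_2,\ C_2C_3,\ C_1C_2)$. In particular, both pairs $\{C_1,C_2\}$ and $\{C_2,C_3\}$ continue to be joined by at least one crossing before and after the move, so the underlying simple graph of $\Gamma(\D)$ is unchanged; hence $\alpha$ and $[\M]$ are preserved.

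Second, since $(C_1,C_2,C_3)$ consists of three consecutive elements of a coherent sequence emanating from $R$, the paragraph preceding the claim yields that $\alpha(C_1),\alpha(C_2),\alpha(C_3)$ are three consecutive integers in the order $\alpha(C_1)<\alpha(C_2)<\alpha(C_3)$, say $\alpha(C_i)=k+i-2$. A direct calculation then shows that the contribution of the three affected crossings to $\Phi$ changes from $(2k+1)+(2k-1)+(2k+1)=6k+1$ to $(2k-1)+(2k+1)+(2k-1)=6k-1$, while all other summands remain unchanged. Thus each type~(a) move strictly decreases $\Phi$ by exactly $2$. Third, $\Phi(\D)\geqslant 0$, so at most $\Phi(\D)/2$ such moves can be performed, which yields termination.

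The main step requiring care is the invariance of the underlying simple Seifert graph under the move --- specifically, that neither of the edges $\{C_1,C_2\}$, $\{C_2,C_3\}$ is destroyed. This follows because each of the two equivalent local words $\sigma_2^{-1}\sigma_1^{-1}\sigma_2^{-1}$ and $\sigma_1^{-1}\sigma_2^{-1}\sigma_1^{-1}$ uses both generators, combined with the purely local nature of the move; I expect this, together with confirming that the coherence relation on Seifert circles survives the modification inside the disk, to be the main (still routine) obstacle.
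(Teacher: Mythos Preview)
Your proof is correct and takes essentially the same route as the paper's: both exhibit a non-negative integer monovariant that strictly drops under each type-(a) Artin move, and in fact your $\Phi=\sum_s(\alpha(C_s)+\alpha(C_s'))$ and the paper's $\Psi=\sum_{\{C,C'\}}\min(\alpha(C),\alpha(C'))\cdot\mathrm{Cr}(C,C')$ are affinely related (for consecutive $\alpha$-values one has $\alpha+\alpha'=2\min(\alpha,\alpha')+1$, so $\Phi=2\Psi+\text{(crossing count)}$, the latter being invariant under the move), whence your drop of $2$ matches the paper's drop of $1$. The one place where the paper is slightly more careful is in justifying the \emph{direction} $\alpha(C_1)<\alpha(C_2)<\alpha(C_3)$: the paragraph you cite only gives consecutiveness, and the paper invokes the standing hypothesis that $R$ is loose on the right to pin down the order.
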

\begin{proof}
We assign the number $$\min(\alpha(C_1), \alpha(C_2)) \cdot {\rm Cr}(C_1,C_2)$$ to each pair $\{C_1,C_2\}$ of consecutive Seifert circles from $[\M]$, where ${\rm Cr}(C_1,C_2)$ denotes the number of crossings that $C_1$ and $C_2$ share. We aim to prove that each application of Artin move decreases by one the sum (over all the pairs) of assigned numbers.

Suppose a diagram $\D_2$ is obtained from $\D_1$ by such a transformation. Let $(C_1,C_2,C_3)$ be the corresponding triple of consecutive Seifert circles from $[\M]$. Since the Artin move is local, i.e.~it affects only a small region of the diagram, it is enough to compare the terms of the sum corresponding to those three Seifert circles. 

Note that the number of crossings that $C_1$ and $C_2$ share on $\D_2$ is one more than the one on~$\D_1$. Similarly, the number of crossings that $C_2$ and $C_3$ share on $\D_2$ is one less than the one on~$\D_1$. Besides, since $R$ is loose on the right and is coherent with each of $C_i$, we have $$\alpha(C_3) = \alpha(C_2) + 1 = \alpha(C_1) + 2.$$ Therefore, the contribution of the pairs of elements from $\{C_1,C_2,C_3\}$ to the sum for $\D_2$ is
\begin{align*}
&\alpha(C_1) \cdot ({\rm Cr}(C_1,C_2)+1) + \alpha(C_2) \cdot ({\rm Cr}(C_2,C_3)-1) = \\
&\alpha(C_1) \cdot {\rm Cr}(C_1,C_2) + \alpha(C_2) \cdot {\rm Cr}(C_2,C_3) + \alpha(C_1) - \alpha(C_2) = \\ 
&\alpha(C_1) \cdot {\rm Cr}(C_1,C_2) + \alpha(C_2) \cdot {\rm Cr}(C_2,C_3) - 1.
\end{align*}
The claim follows. 
\end{proof}

If the Seifert circle $R$ is loose on the left, we proceed similarly. Namely, we replace the Artin moves of direction (a) with those of direction (b). The key elements of the termination argument do not change, because in this case, the orientation of $\D$ forces the triple~$(C_1,C_2,C_3)$ to appear reversed\footnote{Also, see Fig.~\ref{LooseBrace1} below.} on the right of Fig \ref{NegativeSlide}. 

\begin{spacing}{1.5}
\end{spacing}

{\bf Step 3}. Let $\D^{\prime}$ be the diagram obtained from $\D$ by the above Artin moves. We aim to prove that~$\D^\prime$ contains a region of the form 
\begin{tikzpicture}[rotate=90, scale=0.9, baseline=1]
\draw[thick] (0.3,0.3+0.3)--(0,0+0.3);
\draw[thick] (0.2,0.1+0.3)--(0.3,0+0.3);
\draw[thick] (0,0.3+0.3)--(0.1,0.2+0.3);
\draw[->] (0.3,0.3)--(0,0);
\draw[thick] (0.3,0.3)--(0,0);
\draw[thick] (0.2,0.1)--(0.3,0);
\draw[->] (0.2,0.1)--(0.3,0);
\draw[thick] (0,0.3)--(0.1,0.2);
\end{tikzpicture}.
For this, we find a specific brace\footnote{See Definition \ref{Braces}.} of the castle~${\rm Cas}(x, \D^{\prime})$.

Note that the above-mentioned Artin moves may change the castle ${\rm Cas}(x, \D)$, i.e.~combinatorics of the ladders, the number of floors, et cetera. However, since they are local, they do not create traps. In particular, ${\rm Cas}(x, \D^{\prime})$ has no traps. It follows that one can choose $x$ to be the base point in the first phase of the construction of a coherent resolution tree for~$\D^\prime$. Let $\T$ be such a tree.

It is easy to see that Artin moves preserve the sharpness of the bound \eqref{R}. Since this bound is sharp for $\D$, it is sharp for $\D^{\prime}$ as well. Let $\U$ be an arbitrary leaf of $\T$ that contributes to the highest~$a$-degree term.

Denote by $\U_x$ the link component of $\U$ containing $x$. Let $F$ be the floor of the maximal level~$m$ that~$\U_x$ intersects.

By Step 1, the castle ${\rm Cas}(x;\D)$ has no less than one ladder. Since the diagram $\D^\prime$ is positive, $\U_x$ climbs at least one of them. Hence $m \geq 1$ and we can choose a ladder $s_1$ incident to $F$ that $\U_x$ climbs. By Proposition \ref{a-HighestDegreeCriterion}, the projection onto the two-sphere~$\S$ of~$\U_x$ is a simple closed curve. Hence the link component $\U_x$ is bounded within the castle and the Seifert circle $R$. Therefore, there exists another ladder incident to $F$ that $\U_x$ descends. Let~$s_2$ be the first ladder after $s_1$ incident to $F$ and $G$, where~$G$ is the unique floor of level~$m-1$ sharing ladders with $F$.

Denote by $F^\prime$ and $G^\prime$ the segments of $F$ and $G$ joining, respectively,~$s_1$ and~$s_2$. By the construction, the quadruple $(F^\prime, G^\prime, s_1, s_2)$ is a brace.

\begin{definition}
The {\it level} of a brace is the minimum of levels of its two segments. A brace is {\it loose from above} if there are no ladders on top of its segment of higher level.
\end{definition}

By the definition of $F$ and $s_1$, there are no ladders on top of $F^\prime$. Therefore, the brace~$(F^\prime, G^\prime, s_1, s_2)$ is loose from above. To complete the proof of Lemma \ref{SharpNegativeHaveDoubleCrossing}, it is enough to verify the following result.

\begin{claim}
Let $(F^\prime,G^\prime,s_1,s_2)$ be a\footnote{To make Fig. \ref{LooseBrace} and \ref{LooseBrace1} fit the marking, we abuse the notation and use the same symbols for the elements of the quadruple.} brace of level $k \geqslant 0$ that is loose from above. Either it has the form~\begin{tikzpicture}[rotate=90, scale=0.9, baseline=1]
\draw[thick] (0.3,0.3+0.3)--(0,0+0.3);
\draw[thick] (0.2,0.1+0.3)--(0.3,0+0.3);
\draw[thick] (0,0.3+0.3)--(0.1,0.2+0.3);
\draw[->] (0.3,0.3)--(0,0);
\draw[thick] (0.3,0.3)--(0,0);
\draw[thick] (0.2,0.1)--(0.3,0);
\draw[->] (0.2,0.1)--(0.3,0);
\draw[thick] (0,0.3)--(0.1,0.2);
\end{tikzpicture}
or $k\geqslant 1$ and the castle ${\rm Cas}(x, \D^{\prime})$ has a brace $(F^{\prime\prime},G^{\prime\prime},s_1^{\prime},s_2^{\prime})$ of level $k-1$ that is loose from above as well.
\end{claim}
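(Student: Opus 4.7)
My plan is to analyze the local structure of the brace $(F^\prime,G^\prime,s_1,s_2)$ using the two given constraints. Without loss of generality, let $F$ denote the floor of the brace at level $k+1$ and $G$ the one at level $k$, so that $F^\prime \subset F$ and $G^\prime \subset G$. The loose-from-above hypothesis means $F^\prime$ admits no ladders on the side facing away from $G^\prime$, and the no-traps property of ${\rm Cas}(x;\D^\prime)$ means the inside of the brace contains no floors.

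The first step would be to show that $F^\prime$ carries no ladders between $s_1$ and $s_2$ at all. Any such ladder must go either outward from the brace (forbidden by loose-from-above) or into the interior, where its other end must lie on a floor: the only candidates are a floor strictly inside the brace (ruled out by no traps) or the floor $G$ itself via a common ladder between $s_1$ and $s_2$ (ruled out by the adjacency of $s_1,s_2$). By the same sort of reasoning applied to $G^\prime$, any ladder on $G^\prime$ between $s_1$ and $s_2$ is forced to go downward to a level-$(k-1)$ floor; an upward ladder would terminate at $F$ (impossible by adjacency) or at some other level-$(k+1)$ floor in the interior of the brace (impossible by no traps). If $G^\prime$ has no ladders between $s_1$ and $s_2$ either, then the inside of the brace is a topological disk bounded by the two parallel arcs $F^\prime$, $G^\prime$ and the two crossings $s_1,s_2$, with no crossings in its interior; this gives exactly the double-crossing form stated in the claim, and is automatic when $k=0$ since level $k-1$ does not exist.

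When $k\geqslant 1$ and $G^\prime$ does carry downward ladders, I would produce the required brace $(F^{\prime\prime},G^{\prime\prime},s_1^\prime,s_2^\prime)$ of level $k-1$ (with $F^{\prime\prime}$ a sub-segment of $G$ and $G^{\prime\prime}$ a sub-segment of a level-$(k-1)$ floor $H$) as follows. Scanning the ladders on $G^\prime$ in the order of the orientation of $G$, I would search for two consecutive such ladders that go to the same $H$: if they exist, they are automatically adjacent in the $G$-$H$ ordering, since any intervening $G$-$H$ ladder on $G$ would also lie on $G^\prime$ and contradict their consecutiveness; this yields a brace with $F^{\prime\prime}\subseteq G^\prime$, and since $F^{\prime\prime}$ inherits from $G^\prime$ the absence of upward ladders established in the previous step, the new brace is loose from above. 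The main obstacle I anticipate is the sub-case in which no two consecutive downward ladders on $G^\prime$ go to the same level-$(k-1)$ floor; here one must look elsewhere in the castle, and I would try to argue either that this configuration cannot arise in the Artin-normalized diagram $\D^\prime$ (since the Artin moves of Step 2 in the main proof consolidate level-$(k-1)$ floors along $G$), or to locate the required brace by tracing further through the castle's hierarchical structure. Resolving this delicate sub-case is the heart of the argument.
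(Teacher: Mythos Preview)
Your overall plan matches the paper's argument: first show that $F'$ carries no interior ladders, then that any ladder on $G'$ must go downward to a level-$(k-1)$ floor, dispatching the double-crossing case when $G'$ is ladder-free. The gap lies in your treatment of the remaining case, where the ``obstacle'' you flag does not in fact arise.

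Two observations dissolve it. First, there is a \emph{unique} floor $H$ of level $k-1$ sharing ladders with $G$ (this is built into the castle construction), and every ladder incident to $G'$ goes to that $H$. For the latter, take any ladder $t$ on $G'$ and let $t_1,t_2$ be the nearest $G$--$H$ ladders on either side of $t$ along $G$ (these exist since $G$ was built from $H$); they determine a brace of level $k-1$, and since the castle has no traps, $t$ must coincide with $t_1$ or $t_2$. So your worry about consecutive ladders on $G'$ landing on different level-$(k-1)$ floors is vacuous: they all land on $H$.

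Second, once all ladders on $G'$ go to $H$, the Artin normalization of Step~2 forces there to be at least two such ladders. If there were exactly one, call it $t$, then along $G$ the crossings $s_1,t,s_2$ would be consecutive and would form precisely an Artin-movable triple among the coherent Seifert circles carrying $H$, $G$, $F$; but Step~2 exhausted all such moves in $\D'$. Your phrase ``consolidate level-$(k-1)$ floors along $G$'' does not capture this mechanism: the Artin moves do not merge floors, they eliminate this specific crossing pattern.

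With both facts in hand, the first two $G'$--$H$ ladders form the required brace of level $k-1$; it is loose from above because you already showed $G'$ carries no upward ladders.
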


\begin{figure}[h]
\centering
\begin{tikzpicture}[rotate=90,scale=0.6181175, every node/.style={scale=0.80275}]
\draw[ultra thick] (0,0+2.6) -- (0,-19.5);
\draw[ultra thick] (-1.3,0+2.6) -- (-1.3,-19.5);
\draw[ultra thick] (-2.6,0+2.6) -- (-2.6,-19.5);
\draw[very thick] (1.3-1.3,-1.3) -- (0-1.3,-1.3-1.3); 
\draw[ultra thick, draw=blue] (0-1.3,-1.3) -- (0.44-1.3,-1.3-0.44); 
\draw[ultra thick, draw=blue] (2.16-1.3-1.3,-1.3-1.3+0.44) -- (2.6-1.3+0.01-1.3,-1.3-1.3-0.01);
\draw[very thick] (1.3-2.6,-3.9) -- (0-2.6,-1.3-3.9); 
\draw[very thick] (0-2.6,-3.9) -- (0.44-2.6,-3.9-0.44); 
\draw[very thick] (2.16-1.3-2.6,-3.9-1.3+0.44) -- (2.6-1.3+0.01-2.6,-3.9-1.3-0.01);
\draw[very thick] (1.3-2.6,-6.5) -- (0-2.6,-1.3-6.5); 
\draw[very thick] (0-2.6,-6.5) -- (0.44-2.6,-6.5-0.44); 
\draw[very thick] (2.16-1.3-2.6,-6.5-1.3+0.44) -- (2.6-1.3+0.01-2.6,-6.5-1.3-0.01);
\draw[very thick] (1.3-2.6,-9.1) -- (0-2.6,-1.3-9.1); 
\draw[very thick] (0-2.6,-9.1) -- (0.44-2.6,-9.1-0.44); 
\draw[very thick] (2.16-1.3-2.6,-9.1-1.3+0.44) -- (2.6-1.3+0.01-2.6,-9.1-1.3-0.01);
\draw[very thick] (1.3-1.3,-11.7) -- (0-1.3,-11.7-1.3); 
\draw[very thick] (0-1.3,-11.7) -- (0.44-1.3,-11.7-0.44); 
\draw[very thick] (2.16-1.3-1.3,-11.7-1.3+0.44) -- (2.6-1.3+0.01-1.3,-11.7-1.3-0.01);
\draw[very thick] (1.3-1.3,-14.3) -- (0-1.3,-14.3-1.3); 
\draw[very thick] (0-1.3,-14.3) -- (0.44-1.3,-14.3-0.44); 
\draw[very thick] (2.16-1.3-1.3,-14.3-1.3+0.44) -- (2.6-1.3+0.01-1.3,-14.3-1.3-0.01);
\draw[ultra thick, draw=blue] (1.3-1.3,-16.9) -- (0-1.3,-16.9-1.3); 
\draw[very thick] (0-1.3,-16.9) -- (0.44-1.3,-16.9-0.44); 
\draw[very thick] (2.16-1.3-1.3,-16.9-1.3+0.44) -- (2.6-1.3+0.01-1.3,-16.9-1.3-0.01);
\draw[ultra thick, draw=blue] (0,-2.6) -- (0,-2.6-14.3);
\draw[ultra thick, draw=blue] (-1.3,0+2.6) -- (-1.3,-1.3);
\draw[ultra thick, draw=blue] (-1.3,-19.5+1.3) -- (-1.3,-19.5);
\node[scale=1] at (-0.65,-1.3) {$s_1$};
\node[scale=1] at (-0.65,-1.3-10.4) {$s_2$};
\node[scale=1] at (0,0.5+2.6) {$F$};
\node[scale=1] at (-1.3,0.5+2.6) {$G$};
\node[scale=1] at (-2.6,0.5+2.6) {$H$};
\node[scale=1] at (-0.65-1.3,-1.3-2.6) {$s_1^\prime$};
\node[scale=1] at (-0.65-1.3,-1.3-2.6-2.6) {$s_2^\prime$};
\node[scale=1] at (0.3,-7.15) {$F^\prime$};
\node[scale=1] at (0.3-1.3,-7.15) {$G^\prime$};
\node[scale=1] at (0,-19.5-1-0.39) {$k+1$};
\node[scale=1] at (-1.3,-19.5-1) {$k$};
\node[scale=1] at (-2.6,-19.5-1-0.39) {$k-1$};
\node[scale=1, text=blue] at (0.5,-7.15-9.1) {$\U_x$};
\draw [blue,-{Straight Barb}, thick] (0,-7.8) -- (0,-7.8-0.65); 
\draw [-{Straight Barb}, thick] (-1.3,-7.8) -- (-1.3,-7.8-0.65); 
\draw [-{Straight Barb}, thick] (-2.6,-7.8) -- (-2.6,-7.8-0.65); 
\end{tikzpicture}
\caption{Finding a brace if the Seifert circle $R$ is loose on the right.}
\label{LooseBrace}
\end{figure}
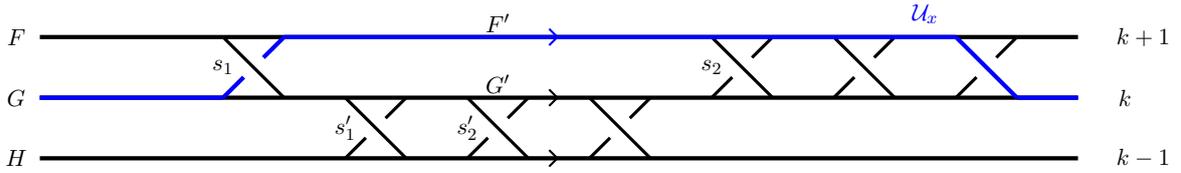

\begin{proof}
Since the castle has no traps, there are no ladders underneath $F^\prime$ or on top of $G^\prime$. If the segment $G^\prime$ is incident to no ladders, the brace has the form 
\begin{tikzpicture}[rotate=90, scale=0.9, baseline=1]
\draw[thick] (0.3,0.3+0.3)--(0,0+0.3);
\draw[thick] (0.2,0.1+0.3)--(0.3,0+0.3);
\draw[thick] (0,0.3+0.3)--(0.1,0.2+0.3);
\draw[->] (0.3,0.3)--(0,0);
\draw[thick] (0.3,0.3)--(0,0);
\draw[thick] (0.2,0.1)--(0.3,0);
\draw[->] (0.2,0.1)--(0.3,0);
\draw[thick] (0,0.3)--(0.1,0.2);
\end{tikzpicture}, and there is nothing to prove. Suppose that $G^\prime$ is incident to at least one ladder. 

Since the brace $(F^\prime,G^\prime,s_1,s_2)$ is not a trap, the ladders incident to $G^\prime$ lie on the opposite side of~$G^\prime$ from~$s_1$ and $s_2$. In particular, the level $k$ of $G^\prime$ is not zero.

Recall that $G$ denotes the floor containing the segment $G^\prime$. Denote by $H$ the unique floor of level~$k-1$ sharing ladders with $G$. First, we prove that any ladder incident to $G^\prime$ is incident to $H$. 

Let $t$ be any such ladder. Following the orientation of~$G$, let us order all the ladders incident to~$G$. Let $t_1$ be the last ladder that $G$ and $H$ share located on the left of $t$, and let~$t_2$ be the first ladder that $G$ and $H$ share located on the right of $t$ (by the definition of a floor, the sets of such ladders are non-empty). The ladders $t_1$ and $t_2$ are adjacent (among all the ladders that $G$ and $H$ share), thus, determine a brace. Since this brace is not a trap,~$t = t_1$ or~$t = t_2$. Therefore, $t$ is incident to $H$.

Second, we prove that $G^\prime$ and $H$ share at least two ladders. By Observation \ref{NoTrapsImpliesCoherence}, the triple of consecutive Seifert circles corresponding to $H$, $G^\prime$, and $F^\prime$ is a part of a sequence from $\M$. In particular, by Step 2, the ladders of ${\rm Cas}(x, \D^{\prime})$ corresponding to that triple do not admit Artin moves. Therefore, the result follows.

Third, let $s_1^\prime$ and $s_2^\prime$ be the first two ladders that $G^\prime$ and $H$ share. Fig. \ref{LooseBrace} and \ref{LooseBrace1} show that they determine a brace of level $k-1$. Since $(F^\prime,G^\prime,s_1,s_2)$ is not a trap, that brace is loose from above. Therefore, the claim follows.
\end{proof}

\begin{figure}[h]
\centering
\begin{tikzpicture}[rotate=90,scale=0.6181175, every node/.style={scale=0.80275}]
\draw[ultra thick] (0,0+2.6) -- (0,-19.5);
\draw[ultra thick] (-1.3,0+2.6) -- (-1.3,-19.5);
\draw[ultra thick] (-2.6,0+2.6) -- (-2.6,-19.5);
\draw[ultra thick,draw=blue] (1.3-1.3-1.3,-1.3) -- (0-1.3-1.3,-1.3-1.3); 
\draw[very thick] (0-1.3-1.3,-1.3) -- (0.44-1.3-1.3,-1.3-0.44); 
\draw[very thick] (2.16-1.3-1.3-1.3,-1.3-1.3+0.44) -- (2.6-1.3+0.01-1.3-1.3,-1.3-1.3-0.01);
\draw[very thick] (1.3-2.6+1.3,-3.9) -- (0-2.6+1.3,-1.3-3.9); 
\draw[very thick] (0-2.6+1.3,-3.9) -- (0.44-2.6+1.3,-3.9-0.44); 
\draw[very thick] (2.16-1.3-2.6+1.3,-3.9-1.3+0.44) -- (2.6-1.3+0.01-2.6+1.3,-3.9-1.3-0.01);
\draw[very thick] (1.3-2.6+1.3,-6.5) -- (0-2.6+1.3,-1.3-6.5); 
\draw[very thick] (0-2.6+1.3,-6.5) -- (0.44-2.6+1.3,-6.5-0.44); 
\draw[very thick] (2.16-1.3-2.6+1.3,-6.5-1.3+0.44) -- (2.6-1.3+0.01-2.6+1.3,-6.5-1.3-0.01);
\draw[very thick] (1.3-2.6+1.3,-9.1) -- (0-2.6+1.3,-1.3-9.1); 
\draw[very thick] (0-2.6+1.3,-9.1) -- (0.44-2.6+1.3,-9.1-0.44); 
\draw[very thick] (2.16-1.3-2.6+1.3,-9.1-1.3+0.44) -- (2.6-1.3+0.01-2.6+1.3,-9.1-1.3-0.01);
\draw[very thick] (1.3-1.3-1.3,-11.7) -- (0-1.3-1.3,-11.7-1.3); 
\draw[very thick] (0-1.3-1.3,-11.7) -- (0.44-1.3-1.3,-11.7-0.44); 
\draw[very thick] (2.16-1.3-1.3-1.3,-11.7-1.3+0.44) -- (2.6-1.3+0.01-1.3-1.3,-11.7-1.3-0.01);
\draw[very thick] (1.3-1.3-1.3,-14.3) -- (0-1.3-1.3,-14.3-1.3); 
\draw[very thick] (0-1.3-1.3,-14.3) -- (0.44-1.3-1.3,-14.3-0.44); 
\draw[very thick] (2.16-1.3-1.3-1.3,-14.3-1.3+0.44) -- (2.6-1.3+0.01-1.3-1.3,-14.3-1.3-0.01);
\draw[very thick] (1.3-1.3-1.3,-16.9) -- (0-1.3-1.3,-16.9-1.3); 
\draw[ultra thick,draw=blue] (0-1.3-1.3,-16.9) -- (0.44-1.3-1.3,-16.9-0.44); 
\draw[ultra thick,draw=blue] (2.16-1.3-1.3-1.3,-16.9-1.3+0.44) -- (2.6-1.3+0.01-1.3-1.3,-16.9-1.3-0.01);
\draw[ultra thick, draw=blue] (-2.6,-2.6) -- (-2.6,-2.6-14.3);
\draw[ultra thick, draw=blue] (-1.3,0+2.6) -- (-1.3,-1.3);
\draw[ultra thick, draw=blue] (-1.3,-19.5+1.3) -- (-1.3,-19.5);
\node[scale=1] at (-0.65-1.3,-1.3) {$s_1$};
\node[scale=1] at (-0.65-1.3,-1.3-10.4) {$s_2$};
\node[scale=1] at (0,0.5+2.6) {$H$};
\node[scale=1] at (-1.3,0.5+2.6) {$G$};
\node[scale=1] at (-2.6,0.5+2.6) {$F$};
\node[scale=1] at (-0.65,-1.3-2.6) {$s_1^\prime$};
\node[scale=1] at (-0.65,-1.3-2.6-2.6) {$s_2^\prime$};
\node[scale=1] at (-2.6-0.3,-7.15) {$F^\prime$};
\node[scale=1] at (-0.3-1.3,-7.15) {$G^\prime$};
\node[scale=1] at (0,-19.5-1-0.39) {$k-1$};
\node[scale=1] at (-1.3,-19.5-1) {$k$};
\node[scale=1] at (-2.6,-19.5-1-0.39) {$k+1$};
\node[scale=1, text=blue] at (-2.6-0.5,-7.15-9.1) {$\U_x$};
\node[scale=1, text=white] at (0.5,-7.15-9.1) {The author is supported by Pavel Solikov};
\draw [-{Straight Barb}, thick] (0,-7.8) -- (0,-7.8-0.65); 
\draw [-{Straight Barb}, thick] (-1.3,-7.8) -- (-1.3,-7.8-0.65); 
\draw [blue,-{Straight Barb}, thick] (-2.6,-7.8) -- (-2.6,-7.8-0.65); 
\end{tikzpicture}
\caption{Finding a brace if the Seifert circle $R$ is loose on the left.}
\label{LooseBrace1}
\end{figure}
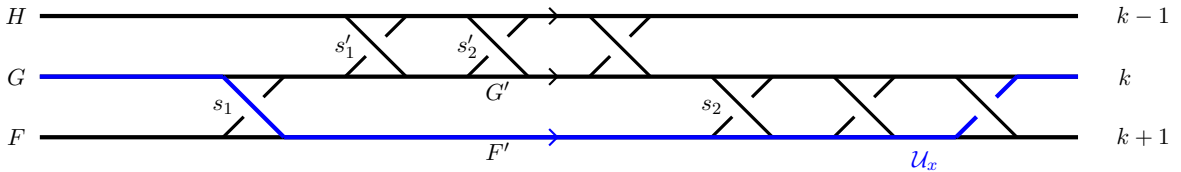

\end{document}